\newcommand{\vc}[1]{\ensuremath{\vcenter{\hbox{#1}}}}
\newcommand{\ray}{\mathrm{ray}}
\newcommand{\tG}{\tilde{G}}
\newtheorem{theorem}{Theorem}
\newtheorem{problem}[theorem]{Problem}
\newtheorem{corollary}[theorem]{Corollary}
\newtheorem{lemma}[theorem]{Lemma}
\newtheorem{observation}[theorem]{Observation}
\newtheorem{conjecture}[theorem]{Conjecture}
\newcommand{\oururl}{\url{http://lidicky.name/pub/4cone/}}
\newcommand{\makenu}[2]{
 \pgfmathsetmacro{\MB}{360/#1}
  \pgfmathsetmacro{\shift}{#2}
 \pgfmathtruncatemacro{\upto}{#1-1}
 \foreach \i in {0,...,\upto}{
 \draw (\shift+90+\i*\MB:1.3) node{\i};
 }
}
\newcommand{\halfedgesFive}{
\draw[dashed] (0,0) circle(1cm);
\foreach \i in {1,...,5}{\draw (90+72*\i:1.1) coordinate (he\i);}
}
\tikzset{
vtx/.style={inner sep=1.5pt, outer sep=0pt, circle, fill=black,draw}
}
\begin{document}
\title{Coloring count cones of planar graphs}
\author{%
     Zden\v{e}k Dvo\v{r}\'ak\thanks{Computer Science Institute (CSI) of Charles University,
           Malostransk{\'e} n{\'a}m{\v e}st{\'\i} 25, 118 00 Prague, 
           Czech Republic. E-mail: \protect\href{mailto:rakdver@iuuk.mff.cuni.cz}{\protect\nolinkurl{rakdver@iuuk.mff.cuni.cz}}.
           Supported by the Neuron Foundation for Support of Science under Neuron Impuls programme.}
\and	   
Bernard Lidick\'y\thanks{Department of Mathematics, Iowa State University. Ames, IA, USA. E-mail: \protect\href{mailto:lidicky@iasate.edu}{\protect\nolinkurl{lidicky@iastate.edu}}. 
Supported in part by NSF grants DMS-1600390 and DMS-1855653.}
}
\date{\today}
\maketitle
\begin{abstract}
For a plane near-triangulation $G$ with the outer face bounded by a~cycle $C$, let $n^\star_G$ denote
the function that to each $4$-coloring $\psi$ of $C$ assigns the number of ways
$\psi$ extends to a $4$-coloring of $G$.  The block-count reducibility argument
(which has been developed in connection with attempted proofs of
the Four Color Theorem) is equivalent to the statement that the function $n^\star_G$ belongs
to a certain cone in the space of all functions from $4$-colorings of $C$ to real numbers.  We investigate
the properties of this cone for $|C|=5$, formulate a conjecture strengthening
the Four Color Theorem, and present evidence supporting this conjecture.
\end{abstract}

\textbf{Key words:} graph coloring, planar graphs, four color theorem, coloring count cone

\medskip 

By the Four Color Theorem~\cite{AppHak1,AppHakKoc,rsst}, every planar graph
is $4$-colorable.  Nevertheless, many natural followup questions regarding
$4$-colorability of planar graphs are wide open.  Even very basic precoloring
extension questions, such as the one given in the following problem, are unresolved
(a \emph{near-triangulation} is a connected plane graph in which all faces except for
the outer one have length three).

\begin{problem}\label{prob-ext}
Does there exists a polynomial-time algorithm which, given a near-triangulation $G$ with
the outer face bounded by a $4$-cycle $C$ and a $4$-coloring $\psi$ of $C$,
correctly decides whether $\psi$ extends to a $4$-coloring of $G$?
\end{problem}
Note that there exist infinitely many near-triangulations $G$ with the outer face bounded by a $4$-cycle $C$
such that not every precoloring of $C$ extends to a $4$-coloring of $G$; and we do not have any good guess
at how the near-triangulations with this property could be described.

Nevertheless, we do have some information about the precoloring extension properties
of plane near-triangulations.  For a plane near-triangulation $G$ with the outer face bounded by a cycle $C$, let $n^\star_G$ denote
the function that to each $4$-coloring $\psi$ of $C$ assigns the number of ways
$\psi$ extends to a $4$-coloring of $G$; hence, $\psi$ extends to a $4$-coloring of $G$ if and only if $n^\star_G(\psi)\neq 0$.
Suppose $C=v_1v_2v_3v_4$ is a $4$-cycle and $\psi_1$, $\psi_2$ and $\psi_3$ are its $4$-colorings such that
$\psi_i(v_j)=j$ for $i\in\{1,2,3\}$ and $j\in\{1,2\}$, $\psi_1(v_3)=\psi_3(v_3)=1$, $\psi_2(v_3)=3$, $\psi_1(v_4)=\psi_2(v_4)=2$, and
$\psi_3(v_4)=4$; see Figure~\ref{fig-psi}.
A standard Kempe chain argument shows that if $n^\star_G(\psi_1)\neq 0$, then $n^\star_G(\psi_2)\neq 0$ or $n^\star_G(\psi_3)\neq 0$.

\begin{figure}
\begin{center}
\begin{tikzpicture}
\draw[fill=gray!50!white] (45:1) -- (135:1) -- (225:1) -- (-45:1) -- cycle;
\draw 
(45:1) node[fill=white,circle,draw,label=right:$v_1$]{$1$}
(135:1) node[fill=white,circle,draw,label=left:$v_2$]{$2$}
(225:1) node[fill=white,circle,draw,label=left:$v_3$]{$1$}
(-45:1) node[fill=white,circle,draw,label=right:$v_4$]{$2$}
;
\draw(0,-1.5) node {$\psi_1$};
\end{tikzpicture}
\hskip 1em
\begin{tikzpicture}
\draw[fill=gray!50!white] (45:1) -- (135:1) -- (225:1) -- (-45:1) -- cycle;
\draw 
(45:1) node[fill=white,circle,draw,label=right:$v_1$]{$1$}
(135:1) node[fill=white,circle,draw,label=left:$v_2$]{$2$}
(225:1) node[fill=white,circle,draw,label=left:$v_3$]{$3$}
(-45:1) node[fill=white,circle,draw,label=right:$v_4$]{$2$}
;
\draw(0,-1.5) node {$\psi_2$};
\end{tikzpicture}
\hskip 1em
\begin{tikzpicture}
\draw[fill=gray!50!white] (45:1) -- (135:1) -- (225:1) -- (-45:1) -- cycle;
\draw 
(45:1) node[fill=white,circle,draw,label=right:$v_1$]{$1$}
(135:1) node[fill=white,circle,draw,label=left:$v_2$]{$2$}
(225:1) node[fill=white,circle,draw,label=left:$v_3$]{$1$}
(-45:1) node[fill=white,circle,draw,label=right:$v_4$]{$4$}
;
\draw(0,-1.5) node {$\psi_3$};
\end{tikzpicture}
\end{center}
\caption{Precolorings $\psi_1$, $\psi_2$, and $\psi_3$ of a $4$-cycle.}\label{fig-psi}
\end{figure}

Actually, much more information can be obtained along these lines, using the idea of \emph{Block-count reducibility}~\cite{bcr1,bcr2} developed in
connection with the attempts to prove the Four Color Theorem: Certain inequalities between linear combinations of
$n^\star_G(\psi_1)$, $n^\star_G(\psi_2)$, and $n^\star_G(\psi_3)$ are satisfied for all near-triangulations $G$,
or equivalently, the vector $(n^\star_G(\psi_1),n^\star_G(\psi_2),n^\star_G(\psi_3))$ is contained in a certain cone in $\mathbb{R}^3$.
The main goal of this note is to present and motivate a conjecture regarding this cone in the case of near-triangulations with the
outer face bounded by a $5$-cycle; this conjecture strengthens the Four Color Theorem.  We also provide evidence supporting this conjecture.

\section{Definitions}

In order to describe the cone we alluded to in the introduction, we need a number of definitions, which we introduce in this section.
It is easier to state the idea in the dual setting of $3$-edge-colorings of cubic plane graphs, which is well-known to be
equivalent to $4$-coloring of plane triangulations~\cite{tait}.

Some graphs in this paper may have parallel edges or loops. 
We call two parallel edges a \emph{double edge} and three parallel edges a \emph{triple edge}.

\subsection{Near-cubic graphs and their edge-colorings}

Let $G$ be a connected graph and let $v$ be a vertex of $G$.
 A \emph{half-edge} is $(e,u)$, where $e$ is an edge and $u$ is one of its endpoints. If $e=uv$, when we say $u$ is \emph{incident} with $(e,u)$ but it is not incident with $(e,v)$.
We consider each edge $e=uv$  of $G$ as consisting of two half-edges $(e,u)$ and $(e,v)$ even if $e$ is a loop.
Let $\nu$ be a bijection between the half-edges incident with $v$
and $\{0,\ldots,\deg(v)-1\}$ (so, if $v$ is incident with a loop, each half of the loop is assigned a different number by $\nu$).
If all vertices of $G$ other than $v$ have degree three,
we say that $\tG=(G,v,\nu)$ is a \emph{near-cubic graph}.  We say that $\tG$ is a \emph{plane near-cubic graph} if $G$
is a plane graph and the half-edges incident with $v$ are drawn around it in the clockwise cyclic order $\nu^{-1}(0)$, \ldots, $\nu^{-1}(\deg(v)-1)$.  We define $d(\tG)=\deg(v)$.

A \emph{$3$-edge-coloring} of $\tG$ is an assignment of colors $1$, $2$, and $3$ to edges of $G$ such that
any two edges incident with a common vertex other than $v$ have different colors.  For an integer $d\ge 2$,
a function $\psi:\{0,\ldots,d-1\}\to \{1,2,3\}$
is a \emph{$d$-precoloring} if $|\psi^{-1}(1)|\equiv |\psi^{-1}(2)|\equiv |\psi^{-1}(3)|\equiv d\pmod 2$. This parity condition is necessary, see Observation~\ref{obs:parity}.
We say that a $3$-edge-coloring $\varphi$ of $\tG$ \emph{extends} a $d(\tG)$-precoloring $\psi$ if for any edge $e$ incident with $v$
and a half-edge $h$ of $e$ incident with $v$, we have $\varphi(e)=\psi(\nu(h))$.

\begin{observation}\label{obs:parity}
For an integer $d\ge 2$, if a function $\psi:\{0,\ldots,d-1\}\to \{1,2,3\}$
does not satisfy $|\psi^{-1}(1)|\equiv |\psi^{-1}(2)|\equiv |\psi^{-1}(3)|\equiv d\pmod 2$, then there is no $\tG$ and 3-edge-coloring $\varphi$ of $\tG$ such that $\varphi$ extends $\psi$.
\end{observation}
\begin{proof}
Let $\varphi$ be a 3-edge-coloring of $\tG$ extending $\psi$.
Let $n = |V(\tG)|$. Since $\tG$ is cubic except for one vertex of degree $d$,  the handshaking lemma gives $|E(\tG)| = (3(n-1) + d)/2$.
For $i \in \{1,2,3\}$, then number of edges colored by 
$|\varphi^{-1}(i)|= ((n-1)+|\psi^{-1}(i)|) / 2$.
Therefore the parities of $d$, $|\psi^{-1}(1)|$, $|\psi^{-1}(2)|$, and $|\psi^{-1}(3)|$  are the same.
\end{proof}

Let $n_{\tG}(\psi)$ denote the number of $3$-edge-colorings of $\tG$ which extend $\psi$.
Via the theory of nowhere-zero flows~\cite{tutteflow}, it is easy to establish
the following correspondence between $4$-colorings of near-triangulations and $3$-edge-colorings in their duals. 
Recall  $n^\star_{G}(\psi)$ denotes the number of $4$-colorings of $G$ which extend $\psi$.
\begin{observation}\label{obs-corr}
Let $\tG=(G,v,\nu)$ be a plane near-cubic graph, and let $G^\star$ be the dual of $G$ drawn so that
the outer face of $G^\star$ corresponds to $v$.  Suppose the outer face of $G^\star$ is bounded by a cycle $C$.
Then there exists a mapping $f$ from $4$-colorings of $C$ to $d(\tG)$-precolorings such that
\begin{itemize}
\item $f$ maps exactly four distinct $4$-colorings of $C$ to each $d(\tG)$-precoloring, and
\item every $4$-coloring $\psi$ of $C$ satisfies $n^\star_{G^\star}(\psi)=n_{\tG}(f(\psi))$.
\end{itemize}
\end{observation}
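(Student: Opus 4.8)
The plan is to express both quantities $n^\star_{G^\star}$ and $n_{\tG}$ in the common language of nowhere-zero flows over the Klein four-group $\Gamma=\mathbb{Z}_2\times\mathbb{Z}_2$. Fix a bijection between the color set $\{1,2,3\}$ and the three non-zero elements $e_1,e_2,e_3$ of $\Gamma$ (so $e_1+e_2+e_3=0$), and a bijection between the four colors used by $4$-colorings and the elements of $\Gamma$. Write the outer cycle of $G^\star$ as $C=F_0F_1\cdots F_{d-1}$, where $d=d(\tG)$ and the $F_i$ (which are faces of $G$ incident with $v$) are listed so that, under the planar duality $E(G)\leftrightarrow E(G^\star)$, the edge of $C$ joining $F_i$ and $F_{i+1}$ corresponds to the edge of $G$ whose half-edge at $v$ is $\nu^{-1}(i)$; arranging this is a matter of matching the clockwise order around $v$ with the cyclic order of $C$. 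For a $4$-coloring $\varphi$ of $C$, set $f(\varphi)(i)=\varphi(F_i)+\varphi(F_{i+1})$, the sum taken in $\Gamma$ and indices read modulo $d$; since $\varphi$ is proper this value is non-zero, so $f(\varphi)\colon\{0,\dots,d-1\}\to\{1,2,3\}$ is well defined.

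First I would verify the first bullet. For any $\psi\colon\{0,\dots,d-1\}\to\{1,2,3\}$, the numbers $|\psi^{-1}(1)|,|\psi^{-1}(2)|,|\psi^{-1}(3)|$ all have the same parity if and only if $\sum_i\psi(i)=0$ in $\Gamma$ (because $e_1+e_2+e_3=0$ and $g+g=0$ for each non-zero $g$), and since these three numbers sum to $d$ this is in turn equivalent to $\psi$ being a $d$-precoloring. For $\psi=f(\varphi)$ the sum $\sum_i f(\varphi)(i)$ telescopes to $2\sum_i\varphi(F_i)=0$, so $f(\varphi)$ is a $d$-precoloring. Conversely, given a $d$-precoloring $\psi$, choosing $\varphi(F_0)\in\Gamma$ arbitrarily and setting $\varphi(F_{i+1})=\varphi(F_i)+\psi(i)$ is consistent around $C$ exactly because $\sum_i\psi(i)=0$, and produces a proper $4$-coloring $\varphi$ of $C$ with $f(\varphi)=\psi$; hence $f$ is onto. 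Finally $f(\varphi)=f(\varphi')$ holds iff $\varphi+\varphi'$ is constant on the connected graph $C$, so the fibers of $f$ are precisely the four-element cosets $\{\varphi+g:g\in\Gamma\}$, which gives the first bullet.

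For the second bullet, fix $\varphi$ and set $\psi=f(\varphi)$. Given a $4$-coloring $c$ of $G^\star$ extending $\varphi$, define an edge function $\chi$ on $G$ by letting $\chi(e)$ be $c$ summed over the two faces of $G$ incident with $e$ (equivalently, over the two endpoints in $G^\star$ of the dual edge of $e$); properness of $c$ makes $\chi$ nowhere zero, with values in $\{1,2,3\}$. If $w\neq v$ has degree three, its three incident edges are dual to the three edges of a length-three face of $G^\star$, and since $c$ gives the (distinct) vertices of that triangle three pairwise distinct colors, the three values of $\chi$ at $w$ are pairwise distinct as well; thus $\chi$ is a $3$-edge-coloring of $\tG$, and by our choice of indexing it extends $\psi$. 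Moreover $c\mapsto\chi$ is injective on colorings extending $\varphi$: if $c,c'$ induce the same $\chi$, then $c+c'$ is constant on $G^\star$, hence identically $0$ because $c$ and $c'$ agree on the nonempty set $V(C)$.

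It remains to show that every $3$-edge-coloring $\chi$ of $\tG$ extending $\psi$ arises this way, and this is the step I expect to be the crux, being the one place planarity enters essentially. The degree-three condition at each $w\neq v$ says the $\Gamma$-sum of $\chi$ over the edges at $w$ equals $e_1+e_2+e_3=0$, and the fact that $\psi$ is a $d$-precoloring says the $\Gamma$-sum of $\chi$ over the edges at $v$ is likewise $0$; hence $\chi$ is a nowhere-zero $\Gamma$-flow of $G$. By the planar flow/coloring correspondence~\cite{tutteflow,tait} — the $\Gamma$-valued cycle space of $G$ coincides, under $E(G)\leftrightarrow E(G^\star)$, with the edge-cut space of the connected plane graph $G^\star$, and every element of the latter is the coboundary $e\mapsto c_0(\text{ends of }e)$ of some $c_0\colon V(G^\star)\to\Gamma$, unique up to a global additive constant — the flow $\chi$ equals the coboundary of some $c_0$, which is a proper coloring because $\chi$ is nowhere zero. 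Since $\chi$ extends $\psi=f(\varphi)$ we get $c_0(F_i)+c_0(F_{i+1})=\varphi(F_i)+\varphi(F_{i+1})$ for all $i$, so $c_0+\varphi$ is a constant $g\in\Gamma$ on $C$, and then $c:=c_0+g$ is a $4$-coloring of $G^\star$ extending $\varphi$ whose induced edge function is $\chi$. Therefore $c\mapsto\chi$ is a bijection from $4$-colorings of $G^\star$ extending $\varphi$ onto $3$-edge-colorings of $\tG$ extending $f(\varphi)$, which yields $n^\star_{G^\star}(\varphi)=n_{\tG}(f(\varphi))$. The details I am suppressing are the duality bookkeeping — the precise matching of indices, and the handling of a loop at $v$ or of a degenerate triangular face of $G^\star$, where ``three edges / three distinct colors'' must be read with care — but these do not affect the structure of the argument.
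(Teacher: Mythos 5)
Your proposal is correct and follows essentially the approach the paper intends: the paper states this observation without proof, citing only the theory of nowhere-zero flows, and your argument is exactly that standard Tait/Tutte correspondence, with $4$-colorings viewed as $\mathbb{Z}_2\times\mathbb{Z}_2$-valued functions whose coboundary on the dual gives the $3$-edge-coloring, and with the fibers of $f$ being the four translates by group elements. The degenerate cases you flag (a bridge in $G$, equivalently a loop in $G^\star$) are indeed routine, since there both counts vanish by the usual parity argument, so no gap remains.
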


Given two near-cubic graphs $\tG_1=(G_1,v_1,\nu_1)$ and $\tG_2=(G_2,v_2,\nu_2)$ with $\deg(v_1)=\deg(v_2)$,
let $\tG_1\oplus \tG_2$ denote the graph obtained from $G_1$ and $G_2$ by, for $0\le i\le \deg(v_1)-1$,
removing the half-edges $\nu_1^{-1}(i)$ and $\nu_2^{-1}(i)$ and connecting the other halfs of the edges.
Note that $\tG_1\oplus \tG_2$ is a cubic graph, and if $\tG_1$ and $\tG_2$ are plane near-cubic graphs,
then $\tG_1\oplus \tG_2$ is a cubic planar graph.  Observe that the number of $3$-edge-colorings of $\tG_1\oplus \tG_2$ is
\begin{align}
\sum_{\psi} n_{\tG_1}(\psi)n_{\tilde{G_2}}(\psi),\label{eq:nsum}
\end{align}
where the sum goes over all $\deg(v_1)$-precolorings $\psi$.  For any integer $n\ge 3$, let $\tilde{C}_n$ denote the
plane near-cubic graph $(W_n,v,\nu)$, where $W_n$ is the wheel with the central vertex $v$ adjacent to all
vertices of an $n$-cycle; see Figure~\ref{fig:C5}.

\begin{figure}
\begin{center}

\vc{\begin{tikzpicture}
\clip (-2.4,-2) rectangle (1.3,1.5);
\draw (0,-1.5) node{$\tilde{C}_{5}$};
\foreach \i in {1,...,5}{\draw (90+72*\i:0.7) coordinate (he\i);}
\draw[every node/.style={inner sep=1.8pt,fill,circle}]
\foreach \i in {1,...,5}{
(90+72*\i:0.5) node (x\i){} --  (he\i)
}
(x1)--(x2)--(x3)--(x4)--(x5)--(x1)
;
\draw  (-2,0) node[vtx,label=left:$v$](v){}
(he1) to[out=90+72,in=30] (v)
(he2) to[out=90+2*72,in=-50] (v)
(he3) to[out=90+3*72,in=-90,looseness=1.2] (v)
(he4) to[out=90+4*72,in=90,looseness=2] (v)
(he5) to[out=90,in=60] (v)
(he1) node[above]{1}
(he2) node[left]{2}
(he3) node[right]{3}
(he4) node[above]{4}
(he5) node[right]{0}
;
\end{tikzpicture}}
\hskip 2em
\vc{\begin{tikzpicture}
\clip (-2.4,-2) rectangle (1.3,1.5);
\draw (0,-1.5) node{$\tilde{C}_{5}$};
\makenu{5}{0}
\halfedgesFive
\draw[every node/.style={inner sep=1.8pt,fill,circle}]
\foreach \i in {1,...,5}{
(90+72*\i:0.5) node (x\i){} --  (he\i)
}
(x1)--(x2)--(x3)--(x4)--(x5)--(x1)
;\end{tikzpicture}}
\end{center}
\caption{
$(W_5,v,\nu)$ also known as $\tilde{C}_{5}$. Entire graph on the left and partial drawing as we use in the rest of the paper on the right.
}\label{fig:C5}
\end{figure}

\subsection{Signatures and Kempe chains}

The following definition of $d$-signature is will be used to capture the possible parities of 2-edge-colored cycles containing $v$ in a 3-edge-coloring of $\tG=(G,v,\nu)$ distinguished by the half-edges contained in the cycles. In particular,
parity will be $s\in\{-1,1\}$ and the pair of half-edges will be $m$.  
For an integer $d\ge 2$, a \emph{$d$-signature} is a set $S$ of pairs $(m,s)$, where $m$ is an unordered pair of integers in $\{0,\ldots,d-1\}$
and $s\in\{-1,1\}$, satisfying the following conditions:
\begin{itemize}
\item[(i)] for any distinct $(m_1,s_1),(m_2,s_2)\in S$ we have $m_1\cap m_2=\emptyset$, and
\item[(ii)] $S$ does not contain elements $(\{a,b\},s_1)$ and $(\{c,d\},s_2)$ such that $a<c<b<d$.
\end{itemize}
A $d$-precoloring $\psi$ is \emph{compatible} in (distinct) colors $i,j\in\{1,2,3\}$ with a $d$-signature $S$ if 
\begin{itemize}
\item $\psi^{-1}(\{i,j\})=\bigcup_{(m,s)\in S} m$, and
\item for each $(\{a_1,a_2\},s)\in S$, $\psi(a_1)=\psi(a_2)$ holds if and only if $s=-1$.
\end{itemize}

Now, consider a $3$-edge-coloring $\varphi$ of a near-cubic graph $\tG=(G,v,\nu)$.  Each vertex other than $v$ is incident with edges of all three colors.
Hence, for any distinct $i,j\in\{1,2,3\}$, the subgraph $G_{ij}$ of $G$ consisting of edges of colors $i$ or $j$ is a union of pairwise
edge-disjoint cycles, vertex-disjoint except for possible intersections in $v$.
An \emph{$ij$-Kempe chain} of $\varphi$ is a cycle $C$ in $G_{ij}$ containing $v$;
the sign $\sigma(C)$ of the $ij$-Kempe chain $C$ is $1$ if the length of $C$ is even and $-1$ if the length of $C$ is odd.
If $h_1$ and $h_2$ are the half-edges
in $C$ incident with $v$, we let $\mu(C)=\{\nu(h_1),\nu(h_2)\}$.  The \emph{$ij$-Kempe chain signature} $\sigma_{ij}(\varphi)$ of $\varphi$ is
defined as
$$\{(\mu(C),\sigma(C)): \text{$C$ is an $ij$-Kempe chain of $\varphi$}\}.$$
Note that if $\tG$ is plane, then the $ij$-Kempe chains do not cross and the $ij$-Kempe chain signature of $\varphi$
satisfies the condition (ii); and thus $\sigma_{ij}(\varphi)$ is a $d(\tG)$-signature.

\section{Coloring count cones}

Let $\tG=(G,v,\nu)$ be a plane near-cubic graph and let $\psi$ be a $d(\tG)$-precoloring.
Suppose that $\psi$ is compatible (in colors $i,j\in\{1,2,3\}$) with a $d(\tG)$-signature $S$.
We define $n_{\tG,S}(\psi)$ as the number of $3$-edge-colorings $\varphi$ of $\tG$ extending $\psi$ such that
$\sigma_{ij}(\varphi)=S$.  Note that swapping the colors $i$ and $j$ on any set of $ij$-Kempe chains of $\varphi$ results
in another $3$-edge-coloring with the same $ij$-Kempe chain signature.  Furthermore, clearly for any permutation $\pi$ of colors,
we have $n_{\tG,S}(\psi\circ \pi)=n_{\tG,S}(\psi)$.  This establishes bijections implying the following.
\begin{observation}
Let $\tG$ be a plane near-cubic graph and let $S$ be a $d(\tG)$-signature.
Any $d(\tG)$-precolorings $\psi_1$ and $\psi_2$ compatible with $S$ satisfy
$$n_{\tG,S}(\psi_1)=n_{\tG,S}(\psi_2).$$
\end{observation}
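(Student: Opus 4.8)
The plan is to reduce to the case where $\psi_1$ and $\psi_2$ are compatible with $S$ in the same pair of colors, and then build an explicit bijection between the corresponding sets of $3$-edge-colorings by interchanging two colors on a suitable family of Kempe chains. For the reduction: if $\psi_1$ is compatible with $S$ in colors $i_1,j_1$ and $\psi_2$ in colors $i_2,j_2$, choose permutations $\pi_1,\pi_2$ of $\{1,2,3\}$ with $\pi_t(\{i_t,j_t\})=\{1,2\}$; then $\pi_t\circ\psi_t$ is a $d(\tG)$-precoloring compatible with $S$ in colors $1,2$, and since $n_{\tG,S}$ is invariant under permuting the colors, $n_{\tG,S}(\pi_t\circ\psi_t)=n_{\tG,S}(\psi_t)$. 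So it suffices to treat the case where both $\psi_1$ and $\psi_2$ are compatible with $S$ in colors $1,2$. In that case $\psi_1^{-1}(3)=\psi_2^{-1}(3)=R$ with $R=\{0,\dots,d(\tG)-1\}\setminus\bigcup_{(m,s)\in S}m$, so $\psi_1$ and $\psi_2$ agree on $R$, while on each pair $(\{a,b\},s)\in S$ both take values in $\{1,2\}$ with $\psi_t(a)=\psi_t(b)$ exactly when $s=-1$. Comparing pair by pair, $\psi_1$ and $\psi_2$ can differ only by interchanging $1$ and $2$ on the two positions of certain pairs of $S$; set $T=\{(\{a,b\},s)\in S:\psi_1(a)\ne\psi_2(a)\}$, so that $\psi_2=3-\psi_1$ on each pair in $T$ and $\psi_2=\psi_1$ on each pair in $S\setminus T$.

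Next I would define the map. Fix a $3$-edge-coloring $\varphi_1$ of $\tG$ extending $\psi_1$ with $\sigma_{12}(\varphi_1)=S$. The $12$-Kempe chains of $\varphi_1$ are pairwise edge-disjoint, hence use disjoint pairs of half-edges at $v$, so $C\mapsto\mu(C)$ is injective on them; since $\sigma_{12}(\varphi_1)=S$, this lets me write $C_m$ for the unique $12$-Kempe chain with $\mu(C_m)=m$ for each $(m,s)\in S$. Let $\Phi(\varphi_1)$ be obtained from $\varphi_1$ by interchanging the colors $1$ and $2$ on $\bigcup_{(m,s)\in T}C_m$. A direct check against the definitions shows $\Phi(\varphi_1)$ is a $3$-edge-coloring extending $\psi_2$ — an edge at $v$ whose half-edge maps into $R$ retains color $3$, while an edge at $v$ inside a pair $m$ lies on $C_m$ and has its color flipped precisely when $(m,s)\in T$, which matches the relation between $\psi_1$ and $\psi_2$ — and, by the remark that interchanging two colors on any set of Kempe chains preserves the $12$-Kempe-chain signature, $\sigma_{12}(\Phi(\varphi_1))=S$.

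Finally I would check that $\Phi$ is a bijection from $\{\varphi:\varphi\text{ extends }\psi_1,\ \sigma_{12}(\varphi)=S\}$ onto $\{\varphi:\varphi\text{ extends }\psi_2,\ \sigma_{12}(\varphi)=S\}$. Since $T$ depends only on $\psi_1$, $\psi_2$, $S$ and is symmetric in $\psi_1$, $\psi_2$, the same construction starting from $\psi_2$ gives a map $\Phi'$ in the opposite direction built from the same index set $T$. Interchanging $1$ and $2$ on a union of cycles of the subgraph formed by the edges of colors $1$ and $2$ leaves that subgraph unchanged, so the $12$-Kempe chains of $\Phi(\varphi_1)$ are again exactly the cycles $C_m$; hence $\Phi'(\Phi(\varphi_1))$ interchanges $1$ and $2$ on $\bigcup_{(m,s)\in T}C_m$ a second time and recovers $\varphi_1$, and symmetrically $\Phi\circ\Phi'=\mathrm{id}$. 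This gives $n_{\tG,S}(\psi_1)=n_{\tG,S}(\psi_2)$. The hard part will be the bookkeeping in this last step: verifying carefully that after the color interchange the Kempe chains are literally the same cycles (merely recolored), so that $\Phi$ and $\Phi'$ are genuinely mutually inverse; the remaining verifications are routine unwindings of the definitions of compatibility and of $\sigma_{ij}$.
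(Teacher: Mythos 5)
Your proposal is correct and follows essentially the same route as the paper, whose entire proof is the remark preceding the observation: swapping the two colors on a suitable set of $ij$-Kempe chains and composing with a permutation of colors ``establishes bijections'' giving the claim. You have simply made those bijections explicit (the reduction via $\pi_t\circ\psi_t$, the set $T$ of pairs on which the swap is performed, and the check that the recolored subgraph $G_{12}$ has the same Kempe chains, so the swap is an involution), which is a faithful elaboration of the paper's argument rather than a different one.
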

Hence, we can define an integer $n_{\tG,S}$ to be equal to $n_{\tG,S}(\psi)$ for an arbitrarily chosen $d(\tG)$-precoloring $\psi$ compatible with $S$.

Let $d\ge 2$ be an integer and let $i,j\in\{1,2,3\}$ be distinct colors.  For a $d$-precoloring $\psi$, let us define $\mathcal{S}_{\psi,ij}$ as the set of $d$-signatures compatible with $\psi$ in colors $ij$.
Since every $3$-edge-coloring of $\tG$ has an $ij$-Kempe chain signature, we have
\begin{equation}\label{eq:main}
n_{\tG}(\psi)=\sum_{S\in\mathcal{S}_{\psi,ij}} n_{\tG,S}(\psi)=\sum_{S\in\mathcal{S}_{\psi,ij}} n_{\tG,S}.
\end{equation}
Let $\mathcal{P}_d$ denote the set of all $d$-precolorings and $\mathcal{S}_d$ the set of all $d$-signatures.
We will work in the vector spaces $\mathbb{R}^{\mathcal{P}_d}$ and $\mathbb{R}^{\mathcal{S}_d}$
with coordinates corresponding to the $d$-precolorings and to the $d$-signatures, respectively.
For each integer $d\ge 2$, the \emph{coloring count cone} $B_d$ is the set of all $x\in\mathbb{R}^{\mathcal{P}_d}$ such that
\begin{itemize}
\item $x(\psi)\ge 0$ for every $d$-precoloring $\psi$, and
\item there exists $y\in \mathbb{R}^{\mathcal{S}_d}$ such that
\begin{itemize}
\item $y(S)\ge 0$ for every $d$-signature $S$, and
\item $x(\psi)=\sum_{S\in\mathcal{S}_{\psi,ij}} y(S)$ for every $d$-precoloring $\psi$ and distinct colors $i,j\in\{1,2,3\}$.
\end{itemize}
\end{itemize}
Note that $B_d$ is indeed a cone, i.e., an unbounded polytope closed under linear combinations with non-negative coefficients.
By (\ref{eq:main}), the vector of precoloring extension counts for any plane near-cubic graph belongs to the corresponding
coloring count cone.
\begin{theorem}\label{thm-cone}
For each plane near-cubic graph $\tG$, we have
$$n_{\tG}\in B_{d(\tG)}.$$
\end{theorem}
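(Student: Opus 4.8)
The plan is to take the vector of Kempe-chain-refined counts as the witness demanded by the definition of $B_{d(\tG)}$. Write $d=d(\tG)$ and define $y\in\mathbb{R}^{\mathcal{S}_d}$ by setting $y(S)=n_{\tG,S}$ whenever the $d$-signature $S$ is compatible with at least one $d$-precoloring (so that $n_{\tG,S}$ is well defined, using the earlier Observation that $n_{\tG,S}(\psi)$ does not depend on the choice of compatible $\psi$), and $y(S)=0$ for all remaining signatures, which will play no role below. I then claim that $x=n_{\tG}$ together with this $y$ satisfies the three defining conditions of $B_d$.

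First I would dispatch the nonnegativity requirements, which are immediate: $n_{\tG}(\psi)$ is the number of $3$-edge-colorings of $\tG$ extending $\psi$, hence a nonnegative integer, and likewise each $n_{\tG,S}$ counts $3$-edge-colorings with a prescribed $ij$-Kempe chain signature, so $y(S)\ge 0$ for every $S$. Then I would verify the linear relation $n_{\tG}(\psi)=\sum_{S\in\mathcal{S}_{\psi,ij}}y(S)$ for every $d$-precoloring $\psi$ and every pair of distinct colors $i,j$: every $3$-edge-coloring $\varphi$ extending $\psi$ has a well-defined $ij$-Kempe chain signature $\sigma_{ij}(\varphi)$, which is a $d$-signature (here planarity of $\tG$ supplies axiom (ii)) lying in $\mathcal{S}_{\psi,ij}$; partitioning the colorings extending $\psi$ according to this signature and recalling that $n_{\tG,S}(\psi)=n_{\tG,S}=y(S)$ for $S\in\mathcal{S}_{\psi,ij}$ yields exactly equation~(\ref{eq:main}). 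Since (\ref{eq:main}) holds for every choice of $i,j$, the single vector $y$ simultaneously witnesses all the required equalities, and hence $n_{\tG}\in B_d$.

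So the proof is in effect a bookkeeping repackaging of equation~(\ref{eq:main}) together with the trivial nonnegativity of coloring counts, and I do not expect any serious obstacle. The one point where care is genuinely needed is that the definition of $B_d$ insists on a \emph{single} $y$ that works for all three color pairs at once; this is precisely why the earlier Observation on the independence of $n_{\tG,S}$ from the compatible precoloring (and hence from the color pair used to certify compatibility) is the load-bearing ingredient rather than a routine remark, and I would make sure to invoke it explicitly when passing from $n_{\tG,S}(\psi)$ to the pair-independent constant $n_{\tG,S}$.
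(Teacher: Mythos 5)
Your proposal is correct and is essentially the paper's own argument: the paper deduces the theorem directly from equation~(\ref{eq:main}) with the implicit witness $y(S)=n_{\tG,S}$, relying on the same Observation that $n_{\tG,S}(\psi)$ is independent of the compatible precoloring, exactly as you do. Your extra care about needing a single $y$ for all three color pairs is a fair emphasis but not a departure from the paper's route.
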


Each cone is uniquely determined as the set of non-negative linear combinations of its rays.
For $d\in\{2,3,4,5\}$, the rays of $B_d$ are easy to enumerate by hand or using polytope-manipulation software such as Sage Math or the Parma Polyhedra Library (a program
doing so for $d=5$ can be found at \oururl).
For a near-cubic graph $\tG$ such that $n_{\tG}$ is not the zero function, let
$\ray(\tG)$ denote the set of all non-negative multiples of $n_{\tG}$.
Graphs $\tilde{R}_{2,1},\ldots,\tilde{R}_{5,12}$ used in the following lemma are depicted in Figure~\ref{fig-rays}. 
\begin{lemma}\label{lemma-rays}
Refering to graphs in Figure~\ref{fig-rays}:
\begin{itemize}
\item the cone $B_2$ has exactly one ray equal to $\ray(\tilde{R}_{2,1})$;
\item the cone $B_3$ has exactly one ray equal to $\ray(\tilde{R}_{3,1})$;
\item the cone $B_4$ has exactly four rays equal to $\ray(\tilde{R}_{4,1})$, \ldots, $\ray(\tilde{R}_{4,4})$; and
\item the cone $B_5$ has exactly $12$ rays equal to $\ray(\tilde{R}_{5,1})$, \ldots, $\ray(\tilde{R}_{5,12})$.
\end{itemize}
\end{lemma}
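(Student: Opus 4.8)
The plan is to verify the claim computationally for each of $d\in\{2,3,4,5\}$, splitting the work into two matching halves: (1) enumerate the rays of $B_d$ directly from its defining linear description, and (2) for each claimed extremal graph $\tilde R_{d,k}$ in Figure~\ref{fig-rays}, compute the vector $n_{\tilde R_{d,k}}$ and check that it spans a ray of $B_d$. For step (1), recall that $B_d$ is presented as the projection onto the $\psi$-coordinates of the polyhedral cone
$$\{(x,y)\in\mathbb{R}^{\mathcal P_d}\times\mathbb{R}^{\mathcal S_d} : x\ge 0,\ y\ge 0,\ x(\psi)=\textstyle\sum_{S\in\mathcal S_{\psi,ij}}y(S)\ \text{for all }\psi,\ i\ne j\}.$$
Since $\mathcal P_d$ and $\mathcal S_d$ are explicit finite sets, one computes the extreme rays of this lifted cone (e.g.\ by the double description method, as implemented in Sage or the Parma Polyhedra Library) and then projects; the program doing this for $d=5$ is available at \oururl, and the cases $d\in\{2,3,4\}$ are small enough to also be checked by hand. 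This yields the ray counts $1$, $1$, $4$, $12$ asserted in the lemma, together with an explicit representative vector $r_{d,k}\in\mathbb{R}^{\mathcal P_d}$ for each ray.

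For step (2), I would use Observation~\ref{obs-corr} together with a direct count of $3$-edge-colorings: each $\tilde R_{d,k}$ is a small plane near-cubic graph, so $n_{\tilde R_{d,k}}(\psi)$ can be evaluated for every $d$-precoloring $\psi$ by brute-force enumeration of $3$-edge-colorings (for the very small cases, such as $\tilde C_n$-type gadgets, one may instead write down a closed form). Theorem~\ref{thm-cone} already guarantees $n_{\tilde R_{d,k}}\in B_d$, so it remains only to check that this vector is a positive multiple of exactly one of the ray representatives $r_{d,k}$ found in step~(1), and that as $k$ ranges over the listed graphs we hit each ray exactly once. Concretely: for each $\tilde R_{d,k}$ one verifies $n_{\tilde R_{d,k}}\ne 0$, normalizes, and matches it against the list $\{r_{d,1},\dots\}$; since membership in $B_d$ is known, matching the direction is a finite rational arithmetic check.

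Two small points need care. First, in the cases $d\in\{2,3\}$ one must observe that $B_d$ is pointed with a single ray — equivalently, that for a $2$- or $3$-precoloring the signature decomposition \eqref{eq:main} forces all extension counts to be proportional — which is immediate from enumerating $\mathcal P_d$ and $\mathcal S_d$ (for $d=2$ there are essentially no choices, and for $d=3$ one checks the handful of precolorings and signatures by hand). Second, for $d\in\{4,5\}$ one should confirm that the rays found in step~(1) are genuinely distinct as directions in $\mathbb{R}^{\mathcal P_d}$, and that none of them is missed by the list of graphs; this is again a finite check once both lists are in hand. The main obstacle is purely bookkeeping: organizing the $d=5$ computation — there $|\mathcal P_5|$ and $|\mathcal S_5|$ are large enough that the ray enumeration and the brute-force coloring counts must be done by machine, and one must trust (and document, via the code at \oururl) that the polytope software and the coloring-enumeration code agree on the same indexing of $\mathcal P_5$. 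No genuinely new mathematical idea is required beyond Theorem~\ref{thm-cone} and Observation~\ref{obs-corr}; the content of the lemma is the explicit output of a verified computation.
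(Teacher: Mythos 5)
Your proposal is correct and follows essentially the same route as the paper, which justifies the lemma precisely by direct enumeration of the rays of $B_d$ (by hand for small $d$, by polytope software for $d=5$) together with brute-force computation of the vectors $n_{\tilde{R}_{d,k}}$ and matching them to the ray representatives. The only small slip is the appeal to Theorem~\ref{thm-cone} to certify $n_{\tilde{R}_{d,k}}\in B_d$: it does not apply to $\tilde{R}_{5,12}$, which is not plane, but this is harmless since your explicit matching of the normalized vector against a computed ray representative already certifies membership.
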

Let us remark that $B_6$ has 208 rays; the direct method we employ is too slow to enumerate all rays for $d\ge 7$ on current workstations.

\begin{figure}
\begin{center}
\vc{\begin{tikzpicture}
\makenu{2}{90}
\draw (0,-1.5) node{$\tilde{R}_{2,1}$};
\draw[dashed] (0,0) circle(1cm);
\draw (180:1.1) -- (0:1.1);
\end{tikzpicture}}
\vc{\begin{tikzpicture}
\makenu{3}{0}
\draw (0,-1.5) node{$\tilde{R}_{3,1}$};
\draw[dashed] (0,0) circle(1cm);
\draw[every node/.style={inner sep=1.8pt,fill,circle}]
(0,0) node(x){} 
(x) -- ++(30:-1.1)
(x) -- ++(150:-1.1)
(x) -- ++(90:1.1)
;
\end{tikzpicture}}

\newcommand{\halfedgesFour}{
\draw[dashed] (0,0) circle(1cm);
\foreach \i in {1,...,4}{\draw (90*\i:1.1) coordinate (he\i);}
}

\foreach \i in {1,2}{
\vc{\begin{tikzpicture}
\draw (0,-1.5) node{$\tilde{R}_{4,\i}$};
\makenu{4}{-45+\i*90}
\begin{scope}[rotate=90*0+45]
\halfedgesFour
\draw[every node/.style={inner sep=1.8pt,fill,circle}]
(he1) to[bend left=45, looseness=1.2] (he2)
(he3) to[bend left=45, looseness=1.2] (he4)
;
\end{scope}
\end{tikzpicture}}
}
\foreach \i in {3,4}{
\vc{\begin{tikzpicture}
\draw (0,-1.5) node{$\tilde{R}_{4,\i}$};
\makenu{4}{45+\i*90}
\begin{scope}[rotate=90*\i+45]
\halfedgesFour
\draw[every node/.style={inner sep=1.8pt,fill,circle}]
(-45:-0.3) node(a){}
(-45:0.3) node(b){}
(he1) -- (a) -- (he2)
(he3) -- (b) -- (he4)
(a)--(b)
;
\end{scope}
\end{tikzpicture}}
}

\foreach \i in {1,...,5}{
\vc{\begin{tikzpicture}
\draw (0,-1.5) node{$\tilde{R}_{5,\i}$};
\makenu{5}{-72+72*\i}
\begin{scope}[rotate=72*1-72]
\halfedgesFive
\draw[every node/.style={inner sep=1.8pt,fill,circle}]
(90:0) node(a){}
(a) -- (he1)
(a) -- (he4)
(a) -- (he5)
(he2) to[bend left=60, looseness=1.5] (he3)
;
\end{scope}
\end{tikzpicture}}
}
\foreach \i in {6,7,8,9,10}{
\vc{\begin{tikzpicture}
\draw (0,-1.5) node{$\tilde{R}_{5,\i}$};
\makenu{5}{-72+72*\i}
\begin{scope}[rotate=72*1-72]
\halfedgesFive
\draw[every node/.style={inner sep=1.8pt,fill,circle}]
(90:0.4) node(a){}  -- (210:0.4) node(b){}  (a) -- (330:0.4) node(c){}
(b) -- (he1)
(b) -- (he2)
(c) -- (he3)
(c) -- (he4)
(a) -- (he5)
;
\end{scope}
\end{tikzpicture}}
}
\vc{\begin{tikzpicture}
\draw (0,-1.5) node{$\tilde{R}_{5,11}$};
\makenu{5}{0}
\halfedgesFive
\draw[every node/.style={inner sep=1.8pt,fill,circle}]
\foreach \i in {1,...,5}{
(90+72*\i:0.5) node (x\i){} --  (he\i)
}
(x1)--(x2)--(x3)--(x4)--(x5)--(x1)
;\end{tikzpicture}}
\vc{\begin{tikzpicture}
\draw (0,-1.5) node{$\tilde{R}_{5,12}$};
\makenu{5}{0}
\halfedgesFive
\draw[every node/.style={inner sep=1.8pt,fill,circle}]
\foreach \i in {1,...,5}{
(90+72*\i:0.5) node (x\i){} -- (he\i)
}
(x1)--(x3)--(x5)--(x2)--(x4)--(x1)
;\end{tikzpicture}}

\end{center}
\caption{Graphs $\tilde{R}_{2,1},\ldots,\tilde{R}_{5,12}$. The dashed circle intersects the half-edges incident with the vertex $v$, which is not depicted for the sake of clarity; the values of $\nu$ are written at the respective half-edges.}\label{fig-rays}
\end{figure}

\section{The cone $B_5$ and the conjecture}

Note that while $\tilde{R}_{5,1}$, \ldots, $\tilde{R}_{5,11}$ are plane, $\tilde{R}_{5,12}$ is not.  Indeed, the following
holds.
\begin{lemma}\label{lem:ab}
The following claims are equivalent.
\begin{itemize}
\item[\textnormal{(a)}] Every planar cubic $2$-edge-connected graph is $3$-edge-colorable.
\item[\textnormal{(b)}] For every plane near-cubic graph $\tG$ with $d(\tG)=5$,
if $n_{\tG}\in\ray(\tilde{R}_{5,12})$, then $n_{\tG}$ is the zero function.
\end{itemize}
\end{lemma}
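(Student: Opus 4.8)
The plan is to reduce everything to the Petersen graph $P$. The key structural fact is that $\tG_{1}\mathrel{:=}\tilde{R}_{5,12}\oplus\tilde{C}_5$ is isomorphic to $P$, where $\tilde{C}_5=\tilde{R}_{5,11}$: substituting the pentagram‑wheel $\tilde{R}_{5,12}$ into the pentagon‑wheel $\tilde{C}_5$ produces an outer $5$‑cycle, an inner pentagram, and five spokes, and a direct check of the maps $\nu$ shows that the spokes are aligned so that the result is $P$ (and not the pentagonal prism). Since $P$ has no $3$‑edge‑coloring and the number of $3$‑edge‑colorings of $\tilde{R}_{5,12}\oplus\tilde{C}_5$ equals $\sum_\psi n_{\tilde{R}_{5,12}}(\psi)\,n_{\tilde{C}_5}(\psi)$ with every summand non‑negative, the supports of $n_{\tilde{R}_{5,12}}$ and $n_{\tilde{C}_5}$ are disjoint. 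On the other hand, for $1\le k\le 11$ the graph $\tilde{R}_{5,k}\oplus\tilde{C}_5$ is a plane cubic $2$‑edge‑connected graph on at most ten vertices, hence $3$‑edge‑colorable, so the supports of $n_{\tilde{R}_{5,k}}$ and $n_{\tilde{C}_5}$ meet. Combining these two observations with Lemma~\ref{lemma-rays} and the non‑negativity of all functions $n_{\tG}$ gives the following reformulation, which is the crux of the argument: \emph{a vector $x\in B_5$ lies in $\ray(\tilde{R}_{5,12})$ if and only if $x(\psi)=0$ for every $\psi$ with $n_{\tilde{C}_5}(\psi)\neq 0$.} For the nontrivial direction one writes $x=\sum_{k=1}^{12}\lambda_k n_{\tilde{R}_{5,k}}$ with all $\lambda_k\ge 0$; at each $\psi$ with $n_{\tilde{C}_5}(\psi)\neq0$ all the non‑negative terms $\lambda_k n_{\tilde{R}_{5,k}}(\psi)$ vanish, and picking for each $k\le 11$ such a $\psi$ in the support of $n_{\tilde{R}_{5,k}}$ forces $\lambda_k=0$.

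For $\textnormal{(a)}\Rightarrow\textnormal{(b)}$, suppose $\tG=(G,v,\nu)$ is a plane near‑cubic graph with $d(\tG)=5$ and $n_{\tG}=\lambda\,n_{\tilde{R}_{5,12}}$ for some $\lambda>0$; we derive a contradiction with $\textnormal{(a)}$. Then $\tG$ has a $3$‑edge‑coloring, and a short parity count (a near‑cubic graph whose underlying graph has a bridge has no $3$‑edge‑coloring) shows that $G$ is bridgeless. Hence $\tG\oplus\tilde{C}_5$ is a connected plane cubic graph, and it is bridgeless as well: any bridge of it would lie in $G$, because $\tilde{C}_5$ is $3$‑connected and none of the five joining edges can be a bridge. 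By $\textnormal{(a)}$ the graph $\tG\oplus\tilde{C}_5$ is $3$‑edge‑colorable, so its number of $3$‑edge‑colorings is positive; but that number equals $\sum_\psi n_{\tG}(\psi)\,n_{\tilde{C}_5}(\psi)=\lambda\sum_\psi n_{\tilde{R}_{5,12}}(\psi)\,n_{\tilde{C}_5}(\psi)=\lambda\cdot 0=0$, a contradiction. So $\lambda=0$ and $n_{\tG}\equiv 0$.

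For $\textnormal{(b)}\Rightarrow\textnormal{(a)}$, argue by contraposition. Let $H$ be a plane cubic $2$‑edge‑connected graph with no $3$‑edge‑coloring and with $|V(H)|$ minimum. The standard reductions of triangles, quadrilaterals and small edge‑cuts (none of which uses the Four Color Theorem) show that $H$ has girth at least $5$ and is cyclically $4$‑edge‑connected; by Euler's formula $H$ has a pentagonal face $F=x_1x_2x_3x_4x_5$, and girth $\ge 5$ makes the third neighbours $y_1,\dots,y_5$ of $x_1,\dots,x_5$ pairwise distinct and disjoint from $V(F)$. Let $\tG$ be obtained from $H$ by deleting $V(F)$ and placing in the vacated face a new vertex $v$ joined to $y_1,\dots,y_5$ in this cyclic order; thus $\tG$ is a plane near‑cubic graph with $d(\tG)=5$, and expanding $v$ back into a pentagon reproduces $F$, so $\tG\oplus\tilde{C}_5\cong H$. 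Consequently $\sum_\psi n_{\tG}(\psi)\,n_{\tilde{C}_5}(\psi)$ equals the number of $3$‑edge‑colorings of $H$, namely $0$; since $n_{\tG}\in B_5$ by Theorem~\ref{thm-cone}, the reformulation above yields $n_{\tG}\in\ray(\tilde{R}_{5,12})$. Finally, $n_{\tG}\not\equiv0$: filling the vacated face instead with a fixed three‑vertex ``spider'' (vertices $a,b,c$ with edges $ab$, $ac$, where $a$ is joined to one of the $y_i$ and $b,c$ to two each, consecutive ones so that the embedding is planar) produces a plane cubic graph $H'$ with $|V(H')|=|V(H)|-2$; one checks $H'$ is $2$‑edge‑connected, so by minimality $H'$ is $3$‑edge‑colorable, and a $3$‑edge‑coloring of $H'$ restricts to a proper $3$‑edge‑coloring of $H-V(F)$, which extends to $\tG$ by colouring each $vy_i$ with the colour missing at $y_i$. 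Thus $n_{\tG}\in\ray(\tilde{R}_{5,12})\setminus\{0\}$, contradicting $\textnormal{(b)}$.

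The main obstacle is the bookkeeping in the last paragraph: one must verify that $H-V(F)$ is connected and that $H'$ is $2$‑edge‑connected (a bridge or a cyclic edge‑cut of size at most $3$ in $H'$ would produce one in $H$, contradicting its connectivity), and this is exactly where the girth and the cyclic $4$‑edge‑connectivity of the minimal counterexample are used. The other ingredients are routine: the identification $\tilde{R}_{5,12}\oplus\tilde{C}_5\cong P$ and the $3$‑edge‑colorability of the small graphs $\tilde{R}_{5,k}\oplus\tilde{C}_5$ are finite checks, the parity lemma on bridges is standard, and the reformulation of $\ray(\tilde{R}_{5,12})$ uses only the enumerated rays of $B_5$ together with the non‑negativity of the coloring counts.
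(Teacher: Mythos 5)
Your proposal is correct in substance and shares the paper's skeleton (the identification $\tilde{R}_{5,12}\oplus\tilde{C}_5\cong$ Petersen, the bilinear count $\sum_\psi n_{\tG_1}(\psi)n_{\tG_2}(\psi)$, the parity/bridge argument, Theorem~\ref{thm-cone} plus the ray description of Lemma~\ref{lemma-rays}, and a minimum counterexample), but it deviates in two places worth comparing. First, you package the ray argument as a support characterization: $x\in B_5$ lies in $\ray(\tilde{R}_{5,12})$ iff $x$ vanishes on the support of $n_{\tilde{C}_5}$; this is a clean repackaging of the paper's ``some $c_k>0$ with $k\le 11$'' step and rests on the same finite check that $\tilde{R}_{5,k}\oplus\tilde{C}_5$ is $3$-edge-colorable for $k\le 11$. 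Second, and more substantially, in (b)$\Rightarrow$(a) the paper reduces along \emph{any} face of length $d\le 5$ guaranteed by Euler's formula, writing $H=\tG\oplus\tilde{C}_d$ (chords of the face cycle simply become loops at $v$, which the near-cubic formalism permits) and using that all rays of $B_2,B_3,B_4$ and the first eleven rays of $B_5$ pair nontrivially with $\tilde{C}_d$; this avoids any structural preprocessing of the minimum counterexample. You instead import the classical snark-type reductions (triangles, quadrilaterals, cyclic edge cuts of size at most $3$) to force girth at least $5$ and cyclic $4$-edge-connectivity, so that a pentagonal face with five distinct external neighbours exists and your three-vertex gadget $H'$ is $2$-edge-connected. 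That route is legitimate and non-circular (those reductions do not use the Four Color Theorem and preserve planarity), and the cyclic $4$-edge-connectivity is genuinely needed for your $H'$ to be bridgeless (with only $2$-edge-connectivity a component of $H-V(F)$ attached solely to $y_2,y_3$ would make $ab$ a bridge), but it is noticeably heavier machinery than the paper's argument, whose only unproved ``observe'' at this point is the existence of a $(d-1)$-vertex gadget $\tilde{P}$ making $\tG\oplus\tilde{P}$ $2$-edge-connected. In short: same strategy, with your version trading the paper's use of the small cones $B_2,\dots,B_4$ and degenerate attachments at $v$ for standard but nontrivial reductions of the minimal counterexample.
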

\begin{proof}
Let us first prove that (a) implies (b).
Consider a plane near-cubic graph $\tG=(G,v,\nu)$ such that $n_{\tG}\in \ray(\tilde{R}_{5,12})$,
and thus for some constant $c\ge 0$, we have $n_{\tG}(\psi)=c\cdot n_{\tilde{R}_{5,12}}(\psi)$ for every $5$-precoloring $\psi$.
Observe that $n_{\tilde{R}_{5,12}}(\psi)n_{\tilde{C}_5}(\psi)=0$ for every $5$-precoloring $\psi$
(since $\tilde{R}_{5,12}\oplus\tilde{C}_5$ is the Petersen graph, which is not $3$-edge-colorable; see Figure~\ref{fig:pet}), and thus
the number of $3$-edge-colorings of $\tG\oplus \tilde{C}_5$ using \eqref{eq:nsum} is
$$\sum_{\psi} n_{\tG}n_{\tilde{C}_5}(\psi)=c\sum_{\psi} n_{\tilde{R}_{5,12}}n_{\tilde{C}_5}(\psi)=0.$$
Hence, the planar cubic graph $\tG\oplus \tilde{C}_5$ is not $3$-edge-colorable.
By (a), $\tG\oplus \tilde{C}_5$ has a bridge, and thus $G$ has a bridge.  But then a standard parity argument implies
that $\tG$ has no $3$-edge-coloring, and thus $n_{\tG}$ is the zero function.

\begin{figure}
\begin{center}
\vc{\begin{tikzpicture}
\draw (0,-1.8) node{$\tilde{R}_{5,12}$};
\draw[every node/.style={inner sep=1.8pt,fill,circle}]
\foreach \i in {1,...,5}{
(90+72*\i:0.5) node (x\i){} 
(3,0)++(90+72*\i:0.5) node (y\i){}
}
(x1)--(x3)--(x5)--(x2)--(x4)--(x1)
(y1)--(y2)--(y3)--(y4)--(y5)--(y1)
;
\draw (3,-1.8) node{$\tilde{C}_{5}$};
\draw  

(x5) to[bend left=90] node[pos=0.1,label=left:0]{}  node[pos=0.9,label=left:0]{} (y5)

(x1) to[out=120,in=120,looseness=1.8] node[pos=0.1,label=left:1]{}  node[pos=0.9,label=left:1]{} (y1)

(x4) to[out=60,in=60,looseness=1.8] node[pos=0.1,label=left:4]{}  node[pos=0.9,label=left:4]{} (y4)

(x2) to[out=270,in=270,looseness=1] node[pos=0.1,label=left:2]{}  node[pos=0.9,label=left:2]{} (y2)

(x3) to[out=270,in=270,looseness=1] node[pos=0.1,label=right:3]{}  node[pos=0.9,label=right:3]{} (y3)
;
\end{tikzpicture}}
\hskip 2em
\vc{\begin{tikzpicture}
\draw (0,-1) node{$\tilde{R}_{5,12}$};
\draw[every node/.style={inner sep=1.8pt,fill,circle}]
\foreach \i in {1,...,5}{
(90+72*\i:0.5) node (x\i){} 
(90+72*\i:1.8) node (y\i){} 
}
(x1)--(x3)--(x5)--(x2)--(x4)--(x1)
(y1)--(y2)--(y3)--(y4)--(y5)--(y1)
;
\draw
(x1)-- node[pos=0.3,label=above:1]{} (y1) 
(x2)-- node[pos=0.6,label=above:2]{} (y2) 
(x3)-- node[pos=0.6,label=above:3]{} (y3) 
(x4)-- node[pos=0.3,label=above:4]{} (y4) 
(x5)-- node[pos=0.5,label=right:0]{} (y5) 
;
\draw (0,-1.8) node{$\tilde{C}_{5}$};
\end{tikzpicture}}
\caption{$\tilde{R}_{5,12}\oplus\tilde{C}_5$ in two different drawings.}\label{fig:pet}
\end{center}
\end{figure}

Next, let us prove that (b) implies (a).  Suppose for a contradiction that (b) holds, but there exists a plane cubic $2$-edge-connected graph
that is not $3$-edge-colorable, and let $H$ be one with the smallest number of vertices.  By Euler's formula and possible parallel edges,
$H$ has a face $f$ of length $2 \le d\le 5$; hence, we can write $H=\tG\oplus\tilde{C}_d$ for a plane near-cubic graph $\tG$.
By Theorem~\ref{thm-cone}, we have $n_{\tG}\in B_d$, and by Lemma~\ref{lemma-rays}, there
exist non-negative real numbers $c_i$ such that
$$n_{\tG}=\sum_i c_in_{\tilde{R}_{d,i}}.$$
Observe there exists a plane near-cubic graph $\tilde{P}$ with $d-1$ vertices such that $\tG\oplus\tilde{P}$ is $2$-edge-connected.
By the minimality of $H$, $\tG\oplus\tilde{P}$ is $3$-edge-colorable, and in particular $n_{\tG}$ is not the zero function.
By (b), $n_{\tG}$ is not a positive multiple of $n_{\tilde{R}_{5,12}}$, and thus there exists an index $k\le 11$ such that $c_k>0$.
Observe that $\tilde{R}_{d,k}\oplus\tilde{C}_d$ is $3$-edge-colorable, and thus there exists a $d$-precoloring $\psi_0$ such that
$n_{\tilde{R}_{d,k}}(\psi_0)n_{\tilde{C}_d}(\psi_0)>0$.  However, then the number of $3$-edge-colorings of $H$ is
$$\sum_{\psi}n_{\tG}(\psi)n_{\tilde{C}_d}(\psi)\ge c_k\sum_{\psi}n_{\tilde{R}_{d,k}}(\psi)n_{\tilde{C}_d}(\psi)\ge c_kn_{\tilde{R}_{d,k}}(\psi_0)n_{\tilde{C}_d}(\psi_0)>0.$$
This contradicts the assumption that $H$ is not $3$-edge-colorable.
\end{proof}

Note that (a) from Lemma~\ref{lem:ab} is well-known to be equivalent to the Four Color Theorem~\cite{tait}, and thus indeed there is
no plane near-cubic graph $\tG$ with $d(\tG)=5$ such that $n_{\tG}$ is not the zero function
and $n_{\tG}\in \ray(\tilde{R}_{5,12})$; and furthermore, a direct proof of this fact
would imply the Four Color Theorem.  Motivated by this observation (and experimental evidence), we propose the following
conjecture, a strengthening of the Four Color Theorem.  Let $B'_5$ denote the cone in $\mathbb{R}^{\mathcal{P}_d}$
with rays $\ray(\tilde{R}_{5,1})$, \ldots, $\ray(\tilde{R}_{5,11})$.
\begin{conjecture}\label{conj-main}
Every plane near-cubic graph $\tG$ with $d(\tG)=5$ satisfies
$n_{\tG}\in B'_5$.
\end{conjecture}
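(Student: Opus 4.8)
The plan is to reduce Conjecture~\ref{conj-main} to a finite verification together with an inductive structural argument. Since $B'_5$ is a finitely generated cone, it is also cut out by finitely many linear inequalities, which---like the rays in Lemma~\ref{lemma-rays}---can be enumerated by polytope software. Every such inequality that is already valid on all of $B_5$ holds for $n_{\tG}$ by Theorem~\ref{thm-cone}, so the whole content of the conjecture sits in the finitely many facet inequalities of $B'_5$ that separate $\ray(\tilde{R}_{5,12})$ from $B'_5$; equivalently, it amounts to showing that the coefficient of $n_{\tilde{R}_{5,12}}$ in a representation of $n_{\tG}$ as a nonnegative combination of $n_{\tilde{R}_{5,1}},\dots,n_{\tilde{R}_{5,12}}$ (which exists by Theorem~\ref{thm-cone} and Lemma~\ref{lemma-rays}, and is unique when these twelve vectors are linearly independent) vanishes. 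As $n_{\tG}\in B_5$ already forces that coefficient to be nonnegative, only the matching \emph{upper} bound is missing. It is likely more convenient to run the argument in the dual language of near-triangulations through Observation~\ref{obs-corr}, where the reducible configurations below take their familiar form.

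The core would be an induction on the number of vertices of $\tG$, in the style of block-count reducibility and of the proof of the Four Color Theorem. Small cases---and the experimental evidence referred to above, pushed as far as feasible---are handled by direct computation. For the inductive step one isolates a list of \emph{$B'_5$-reducible configurations}: local structures (a short non-outer face, equivalently a low-degree interior vertex in the dual; a short separating cycle; a small cut; and so on) such that a plane near-cubic graph containing one can be transformed into, or split along a cycle of length at most $5$ into, strictly smaller plane near-cubic graphs, and for which there is a gluing identity expressing $n_{\tG}$ bilinearly through count-functions of the smaller pieces (an identity in the same spirit as the formula $\sum_\psi n_{\tG_1}(\psi)\,n_{\tG_2}(\psi)$ for the number of $3$-edge-colorings of $\tG_1\oplus\tG_2$). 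Combined with the inductive hypothesis that the pieces lie in the relevant cones, and with the fact that every ray $\tilde{R}_{d,i}$ with $d\le 5$ other than $\tilde{R}_{5,12}$ is realized by a plane graph, one would conclude that the Petersen-type ray contributes nothing to $n_{\tG}$ either. The plane near-cubic graphs with $d=5$ containing no reducible configuration must then be excluded by a discharging argument on the dual near-triangulation.

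A more hands-on attack on the separating inequalities themselves is to look for \emph{local identities and injections among $3$-edge-coloring counts}. One would try to write each new facet normal $\ell$ as a signed combination $\sum_k a_k\, n_{\tilde{H}_k}-\sum_l b_l\, n_{\tilde{H}'_l}$ with all $\tilde{H}_k,\tilde{H}'_l$ plane near-cubic graphs of bounded size and $d=5$, and $a_k,b_l\ge 0$; then $\ell(n_{\tG})$ becomes a difference $\sum_k a_k\,(\#3\text{-edge-colorings of }\tG\oplus\tilde{H}_k)-\sum_l b_l\,(\#3\text{-edge-colorings of }\tG\oplus\tilde{H}'_l)$ of coloring counts of \emph{planar} cubic graphs, and a size-bounded, $\tG$-independent injection between the corresponding sets of colorings---assembled from Kempe swaps localized inside the $\tilde{H}$-part---would give $\ell(n_{\tG})\ge 0$ uniformly in $\tG$.

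The hard part, I expect, is twofold. First, the conjecture provably subsumes the Four Color Theorem: already the statement that no nonzero $n_{\tG}$ with $d(\tG)=5$ lies in $\ray(\tilde{R}_{5,12})$ is equivalent to it, by the lemma preceding the conjecture, so no proof can avoid invoking the Four Color Theorem or redoing work of comparable weight; and, crucially, any reducibility bookkeeping here has to control the entire function $n_{\tG}$ rather than merely whether it is nonzero, which is strictly more demanding than classical reducibility and is where the real difficulty lies. Second, the new facets of $B'_5$ do not appear to correspond to natural graph operations, so pinning down the invariant that survives the reductions, and verifying that a manageable, unavoidable list of configurations is $B'_5$-reducible, is likely to require substantial computation and perhaps new ideas; moreover, since the ray and facet structure of $B_d$ is currently out of reach for $d\ge 7$ (already $B_6$ has $208$ rays), the entire argument must stay inside the cones $B_d$ with $d\le 6$.
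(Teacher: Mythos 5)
The statement you were asked about is a \emph{conjecture}: the paper does not prove it, and neither do you. What you have written is a research programme, not a proof. Every load-bearing step is left open: no unavoidable list of ``$B'_5$-reducible'' configurations is exhibited, no gluing identity is actually verified to preserve membership in the relevant cone, no discharging argument is carried out, and the ``finite verification'' you defer to is never specified. You are right about the shape of the problem --- that the whole content is the single new facet of $B'_5$ (equivalently, killing the contribution of the Petersen-type ray $\ray(\tilde{R}_{5,12})$), and that any proof must subsume the Four Color Theorem --- but identifying the difficulty is not the same as overcoming it, so as a proof of the conjecture the proposal has a gap that is the entire argument.

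For comparison, the paper also stops short of a proof and only provides evidence: it builds auxiliary cones $K_d$ for $d=2,\ldots,8$ (computed by their rays, with $K_5=B'_5$) that are verified by computer to be closed under the gluing operations $\gamma_k$, rotations, and flips (Lemma~\ref{lemma-sets}); it then shows that a minimal ``extremal'' graph with $n_{\tG}\notin K_7$ is highly connected and large (Lemmas~\ref{lemma-growincl}--\ref{lemma-minc}), yielding Corollary~\ref{cor-cexsize}: no counterexample with fewer than $30$ vertices. Your reduction-plus-gluing philosophy is in the same spirit as this machinery, but one concrete point in your write-up is off: you assert the argument ``must stay inside the cones $B_d$ with $d\le 6$'' because the rays of $B_d$ are unknown for $d\ge 7$. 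The paper's device is precisely to avoid this constraint by introducing \emph{other} cones $K_7$ and $K_8$ (with $22605$ and $4330$ rays) that need not equal $B_7$, $B_8$ but are certified to interact well with $\gamma_k$; without some such surrogate, a $d\le 6$ analysis cannot even reproduce the paper's partial results, since the reductions naturally pass through boundary size $7$ (e.g., via $\gamma_1(n_{\tilde{R}_{3,1}},\cdot)$ and the $\gamma_4$, $\gamma_2$ compositions in Lemma~\ref{lemma-minc}). If you intend to pursue your plan, this is the first obstruction you would need to address.
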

For $i\in \{0,\ldots, 4\}$, let $\psi^{5,a}_i$ and $\psi^{5,b}_i$ denote the $5$-precolorings whose values at $j\in \{0,\ldots, 4\}$
are defined by the following table; see also Figure~\ref{fig-psi-ab}.
Notice that $i$ is a rotating the coloring.

\begin{center}
\begin{tabular}{c|c|c}
$(j-i)\bmod 5$&$\psi^{5,a}_i(j)$&$\psi^{5,b}_i(j)$\\
\hline
$0$ & $1$ & $1$\\
$1$ & $1$ & $2$\\
$2$ & $2$ & $1$\\
$3$ & $3$ & $1$\\
$4$ & $1$ & $3$
\end{tabular}
\end{center}

\begin{figure}
\begin{center}
\vc{\begin{tikzpicture}
\draw (0,-1.5) node{$\psi_0^{5,a}$};
 \pgfmathsetmacro{\MB}{360/5}
 \foreach \i/\c in {0/1,1/1,2/2,3/3,4/1}{
 \draw (90+\i*\MB:1.3) node{\i};
 \draw (90+\i*\MB+15:0.8) node{\c};
}
\draw[dashed] (0,0) circle(1cm);
\draw\foreach \i in {1,...,5}{(90+72*\i:0.1) --  (90+72*\i:1.1)};
\draw[fill=gray!30] (0,0) circle(0.5cm);
\draw (0,0) node{ };
\end{tikzpicture}}
\hskip 2em
\vc{\begin{tikzpicture}
\draw (0,-1.5) node{$\psi_1^{5,a}$};
 \pgfmathsetmacro{\MB}{360/5}
 \foreach \i/\c in {0/1,1/1,2/1,3/2,4/3}{
 \draw (90+\i*\MB:1.3) node{\i};
 \draw (90+\i*\MB+15:0.8) node{\c};
}
\draw[dashed] (0,0) circle(1cm);
\draw\foreach \i in {1,...,5}{(90+72*\i:0.1) --  (90+72*\i:1.1)};
\draw[fill=gray!30] (0,0) circle(0.5cm);
\draw (0,0) node{ };
\end{tikzpicture}}
\hskip 2em
\vc{\begin{tikzpicture}
\draw (0,-1.5) node{$\psi_0^{5,b}$};
 \pgfmathsetmacro{\MB}{360/5}
 \foreach \i/\c in {0/1,1/2,2/1,3/1,4/3}{
 \draw (90+\i*\MB:1.3) node{\i};
 \draw (90+\i*\MB+15:0.8) node{\c};
}
\draw[dashed] (0,0) circle(1cm);
\draw\foreach \i in {1,...,5}{(90+72*\i:0.1) --  (90+72*\i:1.1)};
\draw[fill=gray!30] (0,0) circle(0.5cm);
\draw (0,0) node{ };
\end{tikzpicture}}
\end{center}
\caption{Precolorings $\psi_0^{5,a}$ and $\psi_0^{5,b}$.}\label{fig-psi-ab}
\end{figure}

Note that each $5$-precoloring is obtained from one of these ten by a permutation of colors.
The cone $B'_5$ has exactly one facet which is not also a facet of $B_5$, giving an equivalent formulation of Conjecture~\ref{conj-main}.
\begin{conjecture}
Every plane near-cubic graph $\tG$ with $d(\tG)=5$ satisfies
$$3\sum_{i=0}^4 n_{\tG}(\psi^{5,a}_i)\ge \sum_{i=0}^4 n_{\tG}(\psi^{5,b}_i).$$
\end{conjecture}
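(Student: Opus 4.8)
The plan is to prove the inequality by induction on the number of vertices of $G$, where $\tG=(G,v,\nu)$, following the general pattern of the reducibility and discharging proof of the Four Color Theorem~\cite{rsst}, but carrying along the stronger property $n_{\tG}\in B'_5$ at every stage; since Conjecture~\ref{conj-main} strengthens the Four Color Theorem, a complete proof is bound to be substantial, and the route below is the one I consider most promising. First I would observe that there is little to do beyond a single inequality: by Theorem~\ref{thm-cone} we already know $n_{\tG}\in B_5$, and by the remark preceding the statement $B'_5$ is $B_5$ with the single ray $\ray(\tilde{R}_{5,12})$ cut off by one halfspace, so every facet inequality of $B'_5$ except the one in the statement holds for $n_{\tG}$ automatically. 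Hence it suffices to show that a hypothetical counterexample $\tG$ with $|V(G)|$ minimum cannot violate $3\sum_{i=0}^4 n_{\tG}(\psi^{5,a}_i)\ge\sum_{i=0}^4 n_{\tG}(\psi^{5,b}_i)$.

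The first block of the argument reduces to a highly connected, ``irreducible'' situation. If $\tG$ fails to be internally $6$-connected in the relevant sense --- for instance $\tG$ is not $2$-edge-connected, or $G$ has a short edge cut or a short bounded face --- then $\tG$ is assembled from strictly smaller near-cubic graphs via the $\oplus$-operation and the ``linking'' graphs $\tilde{C}_3$, $\tilde{C}_4$, $\tilde{C}_5$. By Lemma~\ref{lemma-rays} every ray of $B_2$, $B_3$, and $B_4$ is realized by a planar near-cubic graph, and by the inductive hypothesis each degree-$5$ piece lies in $B'_5$; one then verifies that each such gluing operation maps a tuple of vectors drawn from these cones back into $B'_5$, contradicting minimality. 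What remains is a minimal counterexample that is internally $6$-connected --- precisely the class of graphs handled by the difficult part of the Four Color Theorem proof.

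The core of the proof, which I expect to be the main obstacle, is then the analogue of the unavoidable-set/reducibility half of that proof, strengthened to the present setting. One must exhibit a finite list of local configurations that are \emph{count-cone reducible}: for every $3$-edge-coloring of the (bounded) boundary of such a configuration, the linear substitution that expresses $n_{\tG}$ through the count vector of the graph obtained by replacing the configuration must map $B'_5$ into $B'_5$; this is a finite linear-programming check, verifiable by computer, just as ordinary reducibility is checked. One must also prove, by a discharging argument, that every internally $6$-connected plane near-cubic graph $\tG$ with $d(\tG)=5$ contains a configuration from the list. The difficulty is twofold. Since the conjecture strengthens the Four Color Theorem, this step cannot be easier than the proof of the latter; and the notion of reducibility needed here is strictly stronger than the classical one --- we must preserve the facet of $B'_5$, not merely the non-vanishing of $n_{\tG}$ --- so the classical unavoidable set is unlikely to transfer unchanged, and a new, probably larger, set of configurations together with new discharging rules would have to be designed and machine-verified.

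As a lighter intermediate target I would first attempt the Kempe-chain relaxation suggested by the $|C|=4$ argument in the introduction: construct, for each $3$-edge-coloring of $\tG$ extending some $\psi^{5,b}_i$, a triple of $3$-edge-colorings extending the precolorings $\psi^{5,a}_0,\ldots,\psi^{5,a}_4$, in a manner injective over all choices of $i$ and of the coloring. Such an argument would be uniform in $\tG$ and would avoid any structural case analysis; but getting a Kempe-chain exchange of this kind to close up without running into the classical obstructions to Kempe-chain proofs is itself the crux, and I expect that in the end it would require the same structural input as the reducibility approach.
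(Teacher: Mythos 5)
The statement you are asked about is a conjecture: it is the facet reformulation of Conjecture~\ref{conj-main}, which strengthens the Four Color Theorem, and the paper does not prove it --- it only provides evidence, namely computer-verified closure properties of auxiliary cones (Lemma~\ref{lemma-sets}) that are leveraged to show no counterexample has fewer than $30$ vertices (Corollary~\ref{cor-cexsize}). Your proposal is likewise not a proof but a research program, and it has a genuine gap exactly where the paper stops: the ``count-cone reducibility plus discharging'' half is entirely unexecuted, and you yourself concede it cannot be easier than the Four Color Theorem proof and would need a new, stronger notion of reducibility (preserving the facet of $B'_5$ rather than mere colorability) with no guarantee that an unavoidable set of such configurations exists. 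The only step you actually complete is the observation that, given Theorem~\ref{thm-cone}, membership in $B'_5$ is equivalent to the single displayed inequality; that is precisely the remark the paper makes to derive the equivalent formulation, not progress toward proving it.

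Your first ``reduction block'' is also more delicate than you suggest, and the paper's partial results show why. You claim that a non-internally-6-connected counterexample decomposes into smaller pieces whose count vectors lie in $B_2,B_3,B_4,B'_5$ and that gluing ``maps back into $B'_5$''; but the natural decompositions of a plane near-cubic graph with $d(\tG)=5$ do not stay within boundary sizes $2$--$5$. The paper's Lemma~\ref{lemma-growincl} has to pass through degree $6$ and $7$ boundaries, and to make the inductive bookkeeping work it must introduce auxiliary cones $K_6$, $K_7$, $K_8$ (with $22605$ rays for $K_7$) whose closure under the gluing operations $\gamma_k$ is only verified computationally and only in the limited ranges listed in Lemma~\ref{lemma-sets}(d)--(i); even with all of that, the argument yields only structural restrictions on an extremal example and the bound $|V(G)|\ge 28$ in Lemma~\ref{lemma-minc}, not a contradiction. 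So the ``routine'' finite LP checks you defer are exactly the open content of the conjecture, and as written your proposal proves nothing beyond what is already contained in the paper's equivalence remark.
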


In the rest of the note, we provide some evidence supporting Conjecture~\ref{conj-main}; in particular, we show there are no counterexamples to the conjecture for plane near-cubic graphs
with less than 30~vertices.

\section{Evidence}

In this section we  present experimental evidence for the validity of Conjecture~\ref{conj-main}. 
Our goal is to show Corollary~\ref{cor-cexsize} stating that Conjecture~\ref{conj-main} holds for graphs on at most 30 vertices.
The main idea of our approach is to generate larger graphs $\tG$ from smaller graphs by planarity preserving operations. One such is depicted in Figure~\ref{fig-gamma}. We will generate ``all'' possibilities for $d(\tG)\leq 7$ and particular ones with $d(\tG) = 8$. We then argue that all graphs in at most 30 vertices can be generated this way.  

We begin by stating a few more definitions.
A vector $x\in \mathcal{P}_d$ is \emph{invariant with respect to permutation of colors} if all $d$-precolorings $\psi$ and $\psi'$ that only differ by a permutation of colors
satisfy $x(\psi)=x(\psi')$.

See Figure~\ref{fig:rotflip} for an illustration of the following definitions.  The \emph{rotation by $t$} of a $d$-precoloring $\psi$
is the $d$-precoloring $r_t(\psi)$ such that $r_t(\psi)((i+t)\bmod d)=\psi(i)$ for $i\in \{0,\ldots, d-1\}$.  The \emph{flip} of a $d$-precoloring $\psi$
is the $d$-precoloring $f(\psi)$ such that $f(\psi)(i)=\psi(d-1-i)$ for $i\in \{0,\ldots, d-1\}$.
For $x\in \mathbb{R}^{\mathcal{P}_d}$, let $r_t(x)$ be defined as $y\in \mathbb{R}^{\mathcal{P}_d}$ such that $y(r_t(\psi))=x(\psi)$ for every $d$-precoloring $\psi$,
and let $f(x)$ be defined as $z\in \mathbb{R}^{\mathcal{P}_d}$ such that $z(f(\psi))=x(\psi)$ for every $d$-precoloring $\psi$.
A set $K\subseteq \mathbb{R}^{\mathcal{P}_d}$ is \emph{closed under rotations and flips} if we have $x\in K$ if and only if $f(x)\in K$ and $r_t(x)\in K$ for all $t\in \{0,1,\ldots,d-1\}$.
For a near-cubic graph $\tG=(G,v,\nu)$ with $\deg(v)=d$, let $r_t(\tG)$ denote the near-cubic graph $(G,v,\nu_1)$, where $\nu_1^{-1}((i+t)\bmod d)=\nu^{-1}(i)$ for $i\in \{0,\ldots, d-1\}$,
and let $f(\tG)$ denote the near-cubic graph $(G,v,\nu_2)$, where $\nu_2^{-1}(i)=\nu_2^{-1}(d-1-i)$ for $i\in \{0,\ldots, d-1\}$.
\begin{observation}
Let $\tG$ be a near-cubic graph, $d=d(\tG)$ and $t\in \{0,\ldots, d-1\}$.
Then $n_{r_t(\tG)}=r_t(n_{\tG})$ and $n_{f(\tG)}=f(n_{\tG})$.
\end{observation}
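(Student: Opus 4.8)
The plan is to exploit the fact that replacing $\tG=(G,v,\nu)$ by $r_t(\tG)$ or by $f(\tG)$ leaves the underlying graph $G$ — and hence the set of all $3$-edge-colorings of $\tG$ — unchanged; only the labelling $\nu$ of the half-edges at $v$ is modified, and this modification affects solely the bookkeeping of which $d$-precoloring a given $3$-edge-coloring extends. First I would record that each $3$-edge-coloring $\varphi$ of $\tG$ extends exactly one $d$-precoloring, namely the function $\psi^{\tG}_\varphi$ determined by $\psi^{\tG}_\varphi(k)=\varphi(e)$, where $e$ is the edge containing the half-edge $\nu^{-1}(k)$ (the parity condition in the definition of a $d$-precoloring is automatic, by the standard parity argument using that every vertex other than $v$ meets all three colors). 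Thus $n_{\tG}(\psi)=\#\{\varphi:\psi^{\tG}_\varphi=\psi\}$, and the entire dependence on $\nu$ is concentrated in the map $\varphi\mapsto\psi^{\tG}_\varphi$.

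Next I would compute $\psi^{r_t(\tG)}_\varphi$ and $\psi^{f(\tG)}_\varphi$ directly from the definitions of $r_t(\tG)$ and $f(\tG)$. Writing $r_t(\tG)=(G,v,\nu_1)$ with $\nu_1^{-1}((i+t)\bmod d)=\nu^{-1}(i)$, the half-edge labelled $k$ by $\nu_1$ is the one labelled $(k-t)\bmod d$ by $\nu$, so $\psi^{r_t(\tG)}_\varphi(k)=\psi^{\tG}_\varphi((k-t)\bmod d)$; comparing with $r_t(\psi)(k)=\psi((k-t)\bmod d)$, this says exactly $\psi^{r_t(\tG)}_\varphi=r_t(\psi^{\tG}_\varphi)$. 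The flip is handled the same way: from $\nu_2^{-1}(i)=\nu^{-1}(d-1-i)$ we get $\psi^{f(\tG)}_\varphi(k)=\psi^{\tG}_\varphi(d-1-k)=f(\psi^{\tG}_\varphi)(k)$, i.e.\ $\psi^{f(\tG)}_\varphi=f(\psi^{\tG}_\varphi)$.

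Finally, since $r_t$ and $f$ are bijections of the set of $d$-precolorings onto itself ($f$ being an involution), for every $d$-precoloring $\chi$ I obtain
$$n_{r_t(\tG)}(\chi)=\#\{\varphi:r_t(\psi^{\tG}_\varphi)=\chi\}=\#\{\varphi:\psi^{\tG}_\varphi=r_t^{-1}(\chi)\}=n_{\tG}(r_t^{-1}(\chi)),$$
which by the definition of the action of $r_t$ on $\mathbb{R}^{\mathcal{P}_d}$ is precisely $(r_t(n_{\tG}))(\chi)$; similarly $n_{f(\tG)}(\chi)=n_{\tG}(f(\chi))=(f(n_{\tG}))(\chi)$, using that $f$ is its own inverse. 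This yields the claim. I do not expect any genuine obstacle; the only point that demands care is keeping the three distinct ``rotation''/``flip'' conventions — the one on precolorings, the one on the labelling $\nu$, and the one on $\mathbb{R}^{\mathcal{P}_d}$ — aligned, and in particular not dropping the inverse in the last displayed computation.
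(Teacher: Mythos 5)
Your proof is correct: the paper states this observation without proof, and your argument is precisely the definitional unwinding it leaves implicit (each $3$-edge-coloring of $G$ determines a unique $d$-precoloring via $\nu$, relabelling by $r_t$ or $f$ transforms that precoloring accordingly, and the inverse is handled correctly when passing to the induced action on $\mathbb{R}^{\mathcal{P}_d}$). You also silently corrected the paper's typo $\nu_2^{-1}(i)=\nu_2^{-1}(d-1-i)$ to the intended $\nu_2^{-1}(i)=\nu^{-1}(d-1-i)$, which is the right reading.
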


\begin{figure}
\begin{center}
\begin{tikzpicture}
\draw(0,0) node{
$\psi$\vc{
\setlength{\tabcolsep}{2pt}
\begin{tabular}{c|c|c|c}
0 & 1 & $\cdots$ & $d-1$ \\ \hline
$c_0$ & $c_1$  & $\cdots$ & $c_{d-1}$
\end{tabular}
}}

(4,0) node{
$r_1(\psi)$\vc{
\setlength{\tabcolsep}{2pt}
\begin{tabular}{c|c|c|c}
0 & 1 & $\cdots$ & $d-1$ \\ \hline
$c_{d-1}$  & $c_0$ & $\cdots$ & $c_{d-2}$
\end{tabular}
}
}

(8,0) node{
$f(\psi)$\vc{
\setlength{\tabcolsep}{2pt}
\begin{tabular}{c|c|c|c}
0 & 1 &  $\cdots$ & $d-1$ \\ \hline
$c_{d-1}$ & $c_{d-2}$ & $\cdots$ & $c_{0}$
\end{tabular}
}}
;
\end{tikzpicture}

\vc{\begin{tikzpicture}
\draw (0,-1.5) node{$\tilde{G}$};
 \pgfmathsetmacro{\MB}{360/5}
 \foreach \i in {0,1,2,3,4}{\draw (90+\i*\MB:1.3) node{\i};} 
\draw[dashed] (0,0) circle(1cm);
\draw\foreach \i in {1,...,5}{(90+72*\i:0.5) --  (90+72*\i:1.1)};
\draw[fill=gray!30] (0,0) circle(0.6cm);
\draw (0,0) node{$G-v$};
\end{tikzpicture}}
\vc{\begin{tikzpicture}
\draw (0,-1.5) node{$r_1(\tilde{G})$};
 \pgfmathsetmacro{\MB}{360/5}
 \foreach \i in {0,1,2,3,4}{\draw (90+\i*\MB-\MB:1.3) node{\i};} 
\draw[dashed] (0,0) circle(1cm);
\draw\foreach \i in {1,...,5}{(90+72*\i:0.5) --  (90+72*\i:1.1)};
\draw[fill=gray!30] (0,0) circle(0.6cm);
\draw (0,0) node{$G-v$};
\end{tikzpicture}}
\vc{\begin{tikzpicture}
\draw (0,-1.5) node{$f(\tilde{G})$};
 \pgfmathsetmacro{\MB}{360/5}
 \foreach \i in {0,1,2,3,4}{\draw (90-\i*\MB-\MB:1.3) node{\i};} 
\draw[dashed] (0,0) circle(1cm);
\draw\foreach \i in {1,...,5}{(90+72*\i:0.5) --  (90+72*\i:1.1)};
\draw[fill=gray!30] (0,0) circle(0.6cm);
\draw (0,0) node{$G-v$};
\end{tikzpicture}}
\end{center}
\caption{Rotation and flip operations. Colors are denoted by $c_0,\ldots,c_{d-1}$.}\label{fig:rotflip}
\end{figure}

Let $\psi_1$ be a $d_1$-precoloring and $\psi_2$ a $d_2$-precoloring.  For an integer $k\le \min(d_1,d_2)$, we say that $\psi_1$ \emph{$k$-matches} $\psi_2$ if
$\psi_1(d_1-k+i)=\psi_2(d_2-1-i)$ for $i\in \{0,1,\ldots, k-1\}$.  By $\gamma_k(\psi_1,\psi_2)$, we denote the $(d_1+d_2-2k)$-precoloring $\gamma$
such that $\gamma(i)=\psi_1(i)$ for $i\in \{0,\ldots, d_1-k-1\}$ and $\gamma(i)=\psi_2(i-(d_1-k))$ for $i\in \{d_1-k,\ldots, d_1+d_2-2k-1\}$.
For $x_1\in \mathbb{R}^{\mathcal{P}_{d_1}}$ and $x_2\in \mathbb{R}^{\mathcal{P}_{d_2}}$, we define $\gamma_k(x_1,x_2)$ as the vector $y\in \mathbb{R}^{\mathcal{P}_{d_1+d_2-2k}}$
such that $$y(\psi)=\sum_{\psi_1,\psi_2:\gamma_k(\psi_1,\psi_2)=\psi} x_1(\psi_1)x_2(\psi_2),$$
where the sum is over all $k$-matching $d_1$-precolorings $\psi_1$ and $d_2$-precolorings $\psi_2$.
For near-cubic graphs $\tG_1=(G_1,v_1,\nu_1)$ with $\deg(v_1)=d_1$ and $\tG_2=(G_2,v_2,\nu_2)$ with $\deg(v_2)=d_2$, let
$\gamma_k(\tG_1,\tG_2)$ denote the near-cubic graph $(G,v,\nu)$, where $G$ is obtained from $G_1$ and $G_2$ by identifying $v_1$ with $v_2$ to a single vertex $v$
and for $i\in \{0,1,\ldots, k-1\}$ removing the half-edges $\nu_1^{-1}(d_1-k+i)$ and $\nu_2^{-1}(d_2-1-i)$ and connecting the other halfs of the edges; and
$\nu^{-1}(i)=\nu_1^{-1}(i)$ for $i\in \{0,\ldots, d_1-k-1\}$ and $\nu^{-1}(i)=\nu^{-1}_2(i-(d_1-k))$ for $i\in \{d_1-k,\ldots, d_1+d_2-2k-1\}$. See Figure~\ref{fig-gamma} for an illustration.
\begin{observation}
Let $\tG_1$ and $\tG_2$ be near-cubic graphs.  For every integer $k\in\{0,\ldots, \min(d(\tG_1),d(\tG_2))\}$,
we have $n_{\gamma_k(\tG_1,\tG_2)}=\gamma_k(n_{\tG_1},n_{\tG_2})$.
\end{observation}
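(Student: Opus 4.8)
The plan is to prove the identity pointwise, by fixing a $(d_1+d_2-2k)$-precoloring $\psi$ (writing $d_1=d(\tG_1)$, $d_2=d(\tG_2)$) and exhibiting a bijection between the $3$-edge-colorings of $\gamma_k(\tG_1,\tG_2)$ that extend $\psi$ and the pairs $(\varphi_1,\varphi_2)$ of $3$-edge-colorings of $\tG_1$ and $\tG_2$ such that $\varphi_1$ extends some $d_1$-precoloring $\psi_1$, $\varphi_2$ extends some $d_2$-precoloring $\psi_2$, and $\psi_1$ $k$-matches $\psi_2$ with $\gamma_k(\psi_1,\psi_2)=\psi$. Counting the two sides of this bijection then gives
\[
n_{\gamma_k(\tG_1,\tG_2)}(\psi)=\sum_{\psi_1,\psi_2}n_{\tG_1}(\psi_1)\,n_{\tG_2}(\psi_2)=\gamma_k(n_{\tG_1},n_{\tG_2})(\psi),
\]
where the middle sum ranges over all $k$-matching pairs with $\gamma_k(\psi_1,\psi_2)=\psi$, which is precisely the definition of $\gamma_k(n_{\tG_1},n_{\tG_2})$.

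For the forward map, recall that in $\gamma_k(\tG_1,\tG_2)=(G,v,\nu)$ the graph $G$ is obtained from $G_1$ and $G_2$ by identifying $v_1$ with $v_2$ and, for $i\in\{0,\ldots,k-1\}$, splicing the edge of $G_1$ carrying the half-edge $\nu_1^{-1}(d_1-k+i)$ to the edge of $G_2$ carrying $\nu_2^{-1}(d_2-1-i)$ into a single edge $e_i$; the surviving half-edges at $v$ are relabelled by $\nu$ so that $\nu^{-1}(i)=\nu_1^{-1}(i)$ for $i\le d_1-k-1$ and $\nu^{-1}(i)=\nu_2^{-1}(i-(d_1-k))$ for the remaining indices. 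Given a $3$-edge-coloring $\varphi$ of $\gamma_k(\tG_1,\tG_2)$, set $c_i=\varphi(e_i)$. Define $\varphi_1$ on $G_1$ by copying $\varphi$ on every edge that survives into $G$ unchanged and assigning $c_i$ to the spliced edge of $G_1$ labelled $i$; define $\varphi_2$ analogously. One checks that $\varphi_1$ and $\varphi_2$ are proper $3$-edge-colorings of $\tG_1$ and $\tG_2$ respectively: at any vertex other than $v_1$ (resp.\ $v_2$) the three incident colors coincide with those seen by $\varphi$, the only point needing attention being the non-special endpoint of a spliced edge, where properness survives because $e_i$ had color $c_i$ in $\varphi$. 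Reading off the colors at the half-edges incident with $v_1$ and $v_2$ produces precolorings $\psi_1,\psi_2$ with $\psi_1(i)=\psi(i)$ and $\psi_2(i)=\psi(i+d_1-k)$ on the surviving half-edges and $\psi_1(d_1-k+i)=\psi_2(d_2-1-i)=c_i$ on the spliced ones; comparing with the definitions, $\psi_1$ $k$-matches $\psi_2$ and $\gamma_k(\psi_1,\psi_2)=\psi$.

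The inverse map simply reverses this: given $(\varphi_1,\varphi_2)$ with the stated properties, keep all colors and give the merged edge $e_i$ the color $\psi_1(d_1-k+i)$, which by the $k$-matching condition equals $\psi_2(d_2-1-i)$, and which is the common color that $\varphi_1$ and $\varphi_2$ assign to the two edges being identified; hence the assignment is well defined. Properness at every vertex of $\gamma_k(\tG_1,\tG_2)$ other than $v$ is inherited from $\varphi_1$ or $\varphi_2$, and the resulting coloring extends $\psi$ because $\gamma_k(\psi_1,\psi_2)=\psi$. These two constructions are manifestly mutually inverse, which completes the bijection and hence the proof.

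The routine part is the case-checking of properness in the two constructions; the genuine obstacle is the bookkeeping at the spliced edges — ensuring that ``the other half'' of each removed half-edge is unambiguously defined and that properness at its endpoints is undisturbed — together with degenerate configurations in which a spliced edge is a loop or a parallel edge at the special vertex, which must be handled separately but obey the same principle. The parity requirement in the definition of a precoloring causes no trouble, since the restriction of any $3$-edge-coloring of a near-cubic graph to the half-edges at its special vertex automatically satisfies it.
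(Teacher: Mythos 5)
Your proof is correct and is essentially the argument the paper has in mind: the statement appears there as an unproved observation, and the intended justification is exactly your cut-and-splice bijection between $3$-edge-colorings of $\gamma_k(\tG_1,\tG_2)$ extending $\psi$ and pairs of colorings of $\tG_1$, $\tG_2$ extending $k$-matching precolorings $\psi_1,\psi_2$ with $\gamma_k(\psi_1,\psi_2)=\psi$. Your explicit check that the restriction to the half-edges at the special vertex automatically satisfies the parity condition (so the induced $\psi_1,\psi_2$ really are precolorings) is a worthwhile detail the paper leaves implicit.
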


\begin{figure}
\begin{center}
\vc{\begin{tikzpicture}
\draw (0,-1.5) node{$\tilde{G}_1$};
 \pgfmathsetmacro{\MB}{360/5}
 \foreach \i in {0,1,2,3,4}{\draw (90+\i*\MB:1.3) node{\i};} 
\draw[dashed] (0,0) circle(1cm);
\draw\foreach \i in {1,...,5}{(90+72*\i:0.5) --  (90+72*\i:1.1)};
\draw[fill=gray!30] (0,0) circle(0.7cm);
\draw (0,0) node{$G_1-v_1$};
\end{tikzpicture}}
\vc{\begin{tikzpicture}
\draw (0,-1.5) node{$\tilde{G}_2$};
 \pgfmathsetmacro{\MB}{360/5}
 \foreach \i in {0,1,2,3,4}{\draw (90+\i*\MB-2*\MB:1.3) node{\i};} 
\draw[dashed] (0,0) circle(1cm);
\draw\foreach \i in {1,...,5}{(90+72*\i:0.5) --  (90+72*\i:1.1)};
\draw[fill=gray!30] (0,0) circle(0.7cm);
\draw (0,0) node{$G_2-v_2$};
\end{tikzpicture}}
\vc{\begin{tikzpicture}
\draw (1.1,-1.5) node{$\gamma_2(\tG_1,\tG_2)$};
 \pgfmathsetmacro{\MB}{360/5}
 \foreach \i in {0,1,2}{\draw (90+\i*\MB:1.3) node{\i};} 
\draw\foreach \i in {1,...,5}{(90+72*\i:0.5) --  (90+72*\i:1.1) coordinate (z\i) };
\draw[fill=gray!30] (0,0) circle(0.7cm);
\draw (0,0) node{$G_1-v_1$};
\begin{scope}[xshift=2.2cm]
 \foreach \i in {3,4,5}{\draw (90+\i*\MB:1.3) node{\i};} 
\draw\foreach \i in {1,...,5}{(90+72*\i:0.5) --  (90+72*\i:1.1) coordinate (y\i) };
\draw[fill=gray!30] (0,0) circle(0.7cm);
\draw (0,0) node{$G_2-v_2$};
\end{scope}
\draw(z4) -- (y1);
\draw(z3) -- (y2);
\draw[dashed,radius=1] (0,1) arc[start angle=90, end angle=270] -- (2.2,-1)  arc[start angle=-90, end angle=90] -- cycle;
\end{tikzpicture}}
\end{center}
\caption{$\gamma_k(\tG_1,\tG_2)$}\label{fig-gamma}
\end{figure}

By a computer-assisted enumeration, we verified the following claim.
\begin{lemma}\label{lemma-sets}
There exists cones $K_d\subseteq \mathbb{R}^{\mathcal{P}_d}$ for $d=2,\ldots,8$ such that the following claims hold.
\begin{itemize}
\item[\textnormal{(a)}] $K_d=B_d$ when $d\le 4$ and $K_5=B'_5$.
\item[\textnormal{(b)}] For all $d\in \{2,\ldots,8\}$, the elements of $K_d$ are invariant with respect to permutation of colors.
\item[\textnormal{(c)}] For $d\in\{2,\ldots,7\}$, the cone $K_d$ is closed under rotations and flips.
\item[\textnormal{(d)}] If $2\le d_1\le d_2$ and $d_1+d_2\le 7$, then for all $x_1\in K_{d_1}$ and $x_2\in K_{d_2}$ we have $\gamma_0(x_1,x_2)\in K_{d_1+d_2}$.
\item[\textnormal{(e)}] If $2\le d\le 5$, then for all $x\in K_d$ we have $\gamma_1(n_{\tilde{R}_{3,1}},x)\in K_{d+1}$.
\item[\textnormal{(f)}] If $3\le d\le 7$, then for all $x\in K_d$ we have $\gamma_2(n_{\tilde{R}_{3,1}},x)\in K_{d-1}$.
\item[\textnormal{(g)}] If $2\le d_1\le 6$ and $1\le c\le d_1/2$, then for all $x_1\in K_{d_1}$ and $x_2\in K_{7+2c-d_1}$, we have $\gamma_c(x_1,x_2)\in K_7$.
\item[\textnormal{(h)}] For every $x_1\in K_8$ and $x_2\in K_7$, we have $\gamma_4(x_1,x_2)\in K_7$.
\item[\textnormal{(i)}] For every $x_1,x_2\in K_6$, we have $r_2(\gamma_2(x_1,x_2))\in K_8$.
\end{itemize}
\end{lemma}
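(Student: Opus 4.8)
The plan is to carry out the asserted computer-assisted enumeration, so I describe how to set it up and what must be checked. First fix $K_2=B_2$, $K_3=B_3$, $K_4=B_4$ and $K_5=B'_5$; by Lemma~\ref{lemma-rays} each of these four cones is known by its (finite) list of extreme rays $\ray(\tilde{R}_{d,i})$, and a single double-description conversion turns each list of rays into a list of facet inequalities. The crucial observation is that \emph{every} operation occurring in (b)--(i) is linear in its argument $x$, respectively bilinear in the pair $(x_1,x_2)$ for the gluing maps $\gamma_k$, and that every cone involved is the non-negative hull of its finitely many extreme rays. Hence a statement of the form ``$\gamma_k(x_1,x_2)\in K$ for all $x_1\in K_1$, $x_2\in K_2$'' is equivalent to ``$\gamma_k(\rho_1,\rho_2)\in K$ for every pair of extreme rays $\rho_1$ of $K_1$, $\rho_2$ of $K_2$'', and similarly for the unary maps $r_t$, $f$ and the color permutations. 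Thus the whole lemma reduces to finitely many evaluations of explicit (bi)linear maps on rays, each followed by a membership test, which in turn is checking that the resulting vector satisfies all facet inequalities of the target cone (or a small LP).

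\textbf{Construction of $K_6$, $K_7$, $K_8$.} Define these three cones by a single simultaneous fixed-point computation. Initialize their generating sets with the vectors that are \emph{forced} into $K_6,K_7,K_8$ by the expanding instances of (d)--(i) whose sources lie among the already-fixed cones $K_2,\dots,K_5$: for example $\gamma_0(\rho_1,\rho_2)$ for extreme rays of $K_{d_1},K_{d_2}$ with $d_1+d_2\in\{6,7\}$, the vectors $\gamma_1(n_{\tilde{R}_{3,1}},\rho)$ for extreme rays $\rho$ of $K_5$, and the expanding cases of (g). Then iterate: from the current ray sets of $K_6,K_7,K_8$, form all vectors obtained by applying the color permutations, the rotations $r_t$ and flips $f$, and every bilinear map in (d)--(i) whose output lands in $K_6$, $K_7$ or $K_8$; discard those already in the current cone (a membership test); add the rest; repeat until none of the three ray sets changes. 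Take $K_6,K_7,K_8$ to be the stabilized cones. (One may additionally seed with $n_{\tilde{C}_d}$ or with count vectors of a few small planar near-cubic graphs if one wants the $K_d$ large enough to be useful downstream, provided the contracting checks below still succeed.)

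\textbf{Verification of (a)--(i).} Property (a) holds by construction. For (b): every seed vector is invariant under permutations of colors --- for the count vectors $n_{\tG}$ this is the elementary identity $n_{\tG}(\pi\circ\psi)=n_{\tG}(\psi)$ --- and each operation used preserves this invariance, so it passes to all generators and hence to all of $K_d$; alternatively one rechecks it on the final extreme rays. For (c): the generating sets were explicitly closed under $r_t$ and $f$ at every step of the iteration. For (d)--(i): for each item, enumerate the finitely many pairs (or singletons) of extreme rays of the source cone(s), apply the indicated map in exact rational arithmetic, and certify membership in the target cone by checking that the image satisfies all of its facet inequalities. Note that the purely low-dimensional instances (source and target all among $K_2,\dots,K_5$) are consistency checks on the choice $K_5=B'_5$, while the dimension-decreasing instances of (f) and the self-referential instances of (g) and (h) are membership checks on the \emph{final} cones --- confirming that these hold is the substance of the computation, and the supporting code and the resulting ray and facet lists are provided at \oururl.

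\textbf{Main obstacle.} The delicate point is the termination, and hence polyhedrality, of the fixed-point construction: there is no obvious a priori bound on the number of extreme rays of $K_6,K_7,K_8$, since the maps $\gamma_1(n_{\tilde{R}_{3,1}},\cdot)$ and $\gamma_2(n_{\tilde{R}_{3,1}},\cdot)$ move between consecutive dimensions and their compositions could in principle keep producing new rays, so one has to observe computationally that the iteration closes up after finitely many rounds. Secondarily, one must keep the polyhedral computations feasible --- $\mathbb{R}^{\mathcal{P}_8}$ has dimension equal to the number of $8$-precolorings, the conversions between ray and facet representations can blow up, and there are many ray-pairs to test --- and one must use exact arithmetic throughout so that the facet-inequality certificates are rigorous rather than numerical.
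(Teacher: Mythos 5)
Your verification scheme coincides with the paper's: since every map in (b)--(i) is linear in $x$, respectively bilinear in $(x_1,x_2)$, and each $K_d$ is a finitely generated cone, it suffices to check the claims when the arguments are extreme rays, certifying each membership in the target cone in exact arithmetic (the paper certifies by exhibiting a non-negative combination of the target cone's rays rather than by facet inequalities, but these are equivalent certificates). That part is fine and is exactly how the paper argues.

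The gap is in the existence of $K_6$, $K_7$, $K_8$, which is the entire content of the lemma. The paper does not obtain them by any closure process; it simply exhibits specific finitely generated cones (with $102$, $22605$ and $4330$ rays, respectively, provided as data) and verifies all nine properties on their rays. Your fixed-point iteration is only a heuristic for producing candidates: you give no argument that it terminates, and, more seriously, a closure-from-below construction works directly against the contracting requirements. Property (f) with $d=6$ demands $\gamma_2(n_{\tilde{R}_{3,1}},x)\in K_5=B'_5$ for every $x\in K_6$, where $B'_5$ is fixed in advance by (a); each time your iteration enlarges $K_6$ (via (d), (e), or (f) applied with $d=7$), this condition only becomes harder to satisfy, and nothing in your argument prevents the stabilized $K_6$ from violating it --- the same tension arises for (g) and (h) if, as your last paragraph suggests, they are treated as post-hoc checks rather than folded into the iteration. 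So the proposal proves the lemma only conditionally on an unverified computation succeeding; you flag this honestly as the ``main obstacle,'' but it means the existence statement is not actually established, whereas in the paper the proof consists precisely of the explicit ray lists together with the finitely many ray-level certificates.
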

\begin{proof}
The proof and the program to verify the proof can be found at \oururl.
The cones are described by their rays, enumerated in the file.
Cone $K_6$ has 102 rays, $K_7$ has 22605 rays, and $K_8$ has 4330 rays.
It suffices to verify all the claims for $x$, $x_1$, $x_2$ being the rays of the cones specified in the claims; the inclusion
of the resulting vectors in the appropriate cone is certified by expressing them as a linear non-negative combination of the rays of the cone.
\end{proof}

Parts (e) and (f) of Lemma~\ref{lemma-sets} have the following corollary.
\begin{lemma}\label{lemma-growincl}
Let $\tG=(G,v,\nu)$ be a plane near-cubic graph and let $d=d(\tG)$.  If $d\in\{2,\ldots,7\}$
and $n_{\tG}\not\in K_d$, then there exists a plane near-cubic graph $\tG_0=(G_0,v_0,\nu_0)$
such that $d(\tG_0)=7$, $n_{\tG_0}\not\in K_7$, $G_0-v_0$ is an induced subgraph of $G-v$,
and $|V(G_0)|\le |V(G)|-(7-d)$.
\end{lemma}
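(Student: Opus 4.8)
The plan is a strong induction on $|V(G)|$, at each step pushing $d$ up by one. First note that by Theorem~\ref{thm-cone} and Lemma~\ref{lemma-sets}(a) we have $n_{\tG}\in B_d=K_d$ whenever $d\le 4$, so the hypothesis $n_{\tG}\notin K_d$ forces $d\in\{5,6,7\}$; if $d=7$ we take $\tG_0=\tG$. Assume $d\in\{5,6\}$, and suppose first that $v$ has a neighbour $x$ joined to it by a single edge. Since $\deg(x)=3$, delete $x$ and reattach the two half-edges of $x$ not lying in the edge $vx$ to two new half-edges of $v$; after also rotating $\nu$ (which by Lemma~\ref{lemma-sets}(c) changes neither $n_{\tG}\notin K_d$ nor the conclusion) one obtains a plane near-cubic graph $\tG_1=(G_1,v_1,\nu_1)$ with $d(\tG_1)=d+1$, with $G_1-v_1=(G-v)-x$, and with $\tG=\gamma_2(\tilde{R}_{3,1},\tG_1)$, hence $n_{\tG}=\gamma_2(n_{\tilde{R}_{3,1}},n_{\tG_1})$. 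Since $3\le d+1\le 7$, the contrapositive of Lemma~\ref{lemma-sets}(f) gives $n_{\tG_1}\notin K_{d+1}$. As $|V(G_1)|=|V(G)|-1$, the induction hypothesis applied to $\tG_1$ produces $\tG_0$ with $d(\tG_0)=7$ and $n_{\tG_0}\notin K_7$, with $G_0-v_0$ an induced subgraph of $G_1-v_1$ and hence of $G-v$, and with $|V(G_0)|\le|V(G_1)|-\bigl(7-(d+1)\bigr)=|V(G)|-(7-d)$, as required.

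It remains to treat the case in which no neighbour of $v$ is joined to $v$ by a single edge. Then every half-edge at $v$ lies in a loop at $v$, a digon to a cubic vertex, or a triple edge to a cubic vertex, and by planarity some such configuration is \emph{innermost}, i.e., bounds only empty regions. Peeling it off expresses $\tG$, up to a rotation, as $\gamma_0(\tilde{R}_{2,1},\tG')$, $\gamma_1(\tilde{R}_{3,1},\tG')$, or $\gamma_0(\tilde{R}_{3,1},\tG')$, with $d(\tG')$ equal to $d-2$, $d-1$, or $d-3$ respectively and with $G'-v'$ obtained from $G-v$ by deleting the peeled-off vertex (if any). Unless $d=6$ and the innermost configuration is a digon, we have $d(\tG')\le 4$, so $n_{\tG'}\in B_{d(\tG')}=K_{d(\tG')}$ by Theorem~\ref{thm-cone} and Lemma~\ref{lemma-sets}(a), whence Lemma~\ref{lemma-sets}(d) or (e) gives $n_{\tG}\in K_d$, contradicting $n_{\tG}\notin K_d$. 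In the remaining case $\tG=\gamma_1(\tilde{R}_{3,1},\tG')$ with $d(\tG')=5$, so the contrapositive of Lemma~\ref{lemma-sets}(e) yields $n_{\tG'}\notin K_5$; since $|V(G')|=|V(G)|-1$, the induction hypothesis applied to $\tG'$ produces the desired $\tG_0$, with $|V(G_0)|\le|V(G)|-3\le|V(G)|-1$. This completes the induction.

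The routine but delicate part — and the one I expect to be the main obstacle — is the embedding bookkeeping behind the two ``inversions'' of $\gamma_2(\tilde{R}_{3,1},-)$ and $\gamma_1(\tilde{R}_{3,1},-)$: one must check that the rotation system and plane embedding of $\tG_1$ (respectively $\tG'$) can indeed be chosen so that $\gamma_2(\tilde{R}_{3,1},\tG_1)$ (respectively $\gamma_1(\tilde{R}_{3,1},\tG')$) reproduces $\tG$ as a \emph{plane} near-cubic graph, with no obstruction when the two other neighbours of the deleted vertex coincide or are already adjacent to $v$, and that an innermost loop, digon, or triple edge really bounds a face whose boundary walk uses only its own edges, so that the $\gamma_0$ or $\gamma_1$ decomposition (after a suitable rotation) is legitimate. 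Granting these purely combinatorial facts, the remaining work is bookkeeping: each recursive step deletes exactly one vertex of $G-v$ while either increasing $d$ by one or keeping it in $\{5,6,7\}$, so the bound $|V(G_0)|\le|V(G)|-(7-d)$ propagates, and every cone inclusion used follows from Lemma~\ref{lemma-sets}(d)--(f) together with the observation that $n_{\gamma_k(\tG_1,\tG_2)}=\gamma_k(n_{\tG_1},n_{\tG_2})$.
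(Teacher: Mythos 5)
Your main line coincides with the paper's proof: induction on $|V(G)|$, the cases $d\le 4$ and $d=7$ handled trivially, and for $d\in\{5,6\}$ the key step of absorbing a simply-attached neighbour of $v$ via $\tG=\gamma_2(\tilde{R}_{3,1},\tG_1)$ with $d(\tG_1)=d+1$, the contrapositive of Lemma~\ref{lemma-sets}(f), closure under rotations from (c), and the vertex count $|V(G_1)|=|V(G)|-1$ giving $|V(G_0)|\le|V(G)|-(7-d)$; the paper does exactly this, and also dispatches the double-edge case through (e) just as you do.

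There is, however, a genuine gap in your Case 2. You assert that ``by planarity some such configuration is innermost, i.e., bounds only empty regions,'' and you yourself flag this as the delicate point, but planarity alone does not give it. Take $d=5$ with a triple edge from $v$ to $x$, a digon from $v$ to $y$ drawn inside the region bounded by two of the $v$--$x$ edges, and the third edge of $y$ going further inward to a gadget (say a cubic vertex $w$ carrying a loop) drawn inside the digon region: here the only configuration whose half-edges are consecutive at $v$ is the digon, and its bounded region is not empty, so no loop, digon or triple edge at $v$ bounds only empty regions. What saves the argument is not planarity but the observation that $n_{\tG}\notin K_d$ forces $n_{\tG}$ to be nonzero (since $0\in K_d$), whence by the standard parity argument $G$ has no bridge and, more to the point, configurations with material hanging inside them off the third edge of a digon vertex cannot occur; your proposal never uses $n_{\tG}\neq 0$ anywhere. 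The paper handles precisely this issue by treating the case ``$G-v$ disconnected (or $v$ incident with a loop)'' first: it notes explicitly that $n_{\tG}$ is not identically zero, writes $\tG=\gamma_0(\tG_1,\tG_2)$ with both parts of degree between $2$ and $4$, and gets a contradiction from Lemma~\ref{lemma-sets}(d); after that, connectivity of $G-v$ makes triple edges impossible and makes the digon's half-edges automatically consecutive with an empty enclosed region, so the $\gamma_1$ and $\gamma_2$ decompositions need no further embedding care. So your argument is repairable — either add the $n_{\tG}\neq0$ observation and deduce bridgelessness, or restructure as the paper does by splitting off the disconnected case via (d) — but as written the ``innermost, hence empty'' step is the missing idea, not a routine consequence of planarity.
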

\begin{proof}
We prove the claim by induction on the number of vertices of $G$.
When $d\le 4$, the claim is vacuously true by Theorem~\ref{thm-cone}, since $K_d=B_d$.
When $d=7$, we can set $\tG_0=\tG$.
Hence, suppose that $d\in\{5,6\}$.  Since $n_{\tG}\not\in K_d$, the function $n_{\tG}$ is not identically zero.

If $G-v$ is disconnected, we can by symmetry assume that
$\tG=\gamma_0(\tG_1,\tG_2)$ for plane near-cubic graphs $\tG_1$ and $\tG_2$ such that
$d=d(\tG_1)+d(\tG_2)$ and $d(\tG_1)\le d(\tG_2)$.  Since $n_{\tG}$ is not the zero function,
$n_{\tG_1}$ is not the zero function either, and thus $d(\tG_1)\neq 1$.
Hence $d(\tG_1)\ge 2$, and thus $2 \le d(\tG_2)\le 4$.  Hence by Lemma~\ref{lemma-sets}(a), $n_{\tG_1}\in K_{d(\tG_1)}$ and $n_{\tG_2}\in K_{d(\tG_2)}$,
and $n_{\tG}\in K_d$ by Lemma~\ref{lemma-sets}(d), which is a contradiction.

Hence, $G-v$ is connected (and the same argument as for disconnected $G-v$  shows that no loop is incident with $v$).
Consequently, $v$ is not incident with a triple edge.
If $v$ is incident with a double edge, then we can by symmetry assume that
$\tG=\gamma_1(\tilde{R}_{3,1},\tG_1)$ for a plane near-cubic graph $\tG_1=(G_1,v_1,\nu_1)$ with $d(\tG_1)=d-1\le 5$.
By Lemma~\ref{lemma-sets}(e), since $n_{\tG}\not\in K_d$, we have $n_{\tG_1}\not\in K_{d-1}$.
By the induction hypothesis, there exists a plane near-cubic graph $\tG_0=(G_0,v_0,\nu_0)$ with $d(\tG_0)=7$,
such that $n_{\tG_0}\not\in K_7$, $G_0-v_0$ is an induced subgraph of $G_1-v_1$, and thus also of $G-v$,
and $|V(G_0)|\le |V(G_1)|-(7-(d-1))<|V(G)|-(7-d)$, as required.

Hence, we can assume $v$ is not incident with a double edge.  Consequently, we can by symmetry assume that
$\tG=\gamma_2(\tilde{R}_{3,1},\tG_1)$ for a plane near-cubic graph $\tG_1=(G_1,v_1,\nu_1)$ with $d(\tG_1)=d+1$.
By Lemma~\ref{lemma-sets}(f), since $n_{\tG}\not\in K_d$, we have $n_{\tG_1}\not\in K_{d+1}$.
By the induction hypothesis, there exists a plane near-cubic graph $\tG_0=(G_0,v_0,\nu_0)$ with $d(\tG_0)=7$,
such that $n_{\tG_0}\not\in K_7$, $G_0-v_0$ is an induced subgraph of $G_1-v_1$, and $|V(G_0)|\le |V(G_1)|-(7-(d+1))=|V(G)|-(7-d)$.
Hence, the claim of the lemma follows.
\end{proof}

We will say that a plane near-cubic graph $\tG=(G,v,\nu)$ is \emph{extremal} if $d(\tG)=7$, $n_{\tG}\not\in K_7$,
and there does not exist any plane near-cubic graph $\tG_0=(G_0,v_0,\nu)$ with $d(\tG_0)=7$ such that $n_{\tG_0}\not\in K_7$
and $G_0-v_0$ is a proper minor of $G-v$.
\begin{lemma}\label{lemma-nosg}
If $\tG=(G,v,\nu)$ is an extremal plane near-cubic graph and $\tG'=(G',v',\nu')$ is a plane near-cubic graph
with $d(\tG')\le 7$ such that $G'-v'$ is a proper minor of $G-v$, then $n_{\tG'}\in K_{d(\tG')}$.
\end{lemma}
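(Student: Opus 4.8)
The plan is a short argument by contradiction, combining Lemma~\ref{lemma-growincl} with the transitivity of the minor relation. Suppose, for the sake of contradiction, that $n_{\tG'}\notin K_{d(\tG')}$. Since $d(\tG')\le 7$, I would apply Lemma~\ref{lemma-growincl} to the plane near-cubic graph $\tG'$ to obtain a plane near-cubic graph $\tG_0=(G_0,v_0,\nu_0)$ with $d(\tG_0)=7$, $n_{\tG_0}\notin K_7$, and $G_0-v_0$ an induced subgraph of $G'-v'$. (When $d(\tG')\le 4$ this situation cannot arise at all, since $n_{\tG'}\in B_{d(\tG')}=K_{d(\tG')}$ by Theorem~\ref{thm-cone} and Lemma~\ref{lemma-sets}(a); so in effect only $d(\tG')\in\{5,6,7\}$ is relevant, but Lemma~\ref{lemma-growincl} handles all cases uniformly, and for $d(\tG')=7$ one may simply have $\tG_0=\tG'$.)

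Next I would observe that an induced subgraph is in particular a minor, so $G_0-v_0$ is a minor of $G'-v'$; and $G'-v'$ is a proper minor of $G-v$ by the hypothesis of the lemma. Since the minor relation is transitive and $G'-v'\ne G-v$, it follows that $G_0-v_0$ is a \emph{proper} minor of $G-v$. But then $\tG_0$ is a plane near-cubic graph with $d(\tG_0)=7$, $n_{\tG_0}\notin K_7$, and $G_0-v_0$ a proper minor of $G-v$, which directly contradicts the assumption that $\tG$ is extremal. Hence no such $\tG'$ exists, proving the lemma.

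I do not expect a serious obstacle here: the entire content is bookkeeping in the minor order. The one point worth stating carefully is why composing ``$G_0-v_0$ is a minor of $G'-v'$'' with ``$G'-v'$ is a proper minor of $G-v$'' yields a proper minor of $G-v$ --- this holds because $G'-v'$ is already strictly below $G-v$ in the minor order, so anything at or below $G'-v'$ is strictly below $G-v$. It is also worth noting that the edge-labelling $\nu_0$ produced by Lemma~\ref{lemma-growincl} plays no role: extremality of $\tG$ excludes \emph{every} plane near-cubic graph on a $7$-half-edge central vertex whose ``body'' is a proper minor of $G-v$, regardless of its labelling, so the particular $\nu_0$ never enters the argument.
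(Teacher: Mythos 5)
Your proposal is correct and follows essentially the same argument as the paper: apply Lemma~\ref{lemma-growincl} to $\tG'$ to obtain $\tG_0$ with $d(\tG_0)=7$, $n_{\tG_0}\notin K_7$, and $G_0-v_0$ an induced subgraph of $G'-v'$, hence a proper minor of $G-v$, contradicting extremality. The additional remarks (transitivity of the minor order, irrelevance of $\nu_0$, vacuousness for $d\le 4$) are accurate but not needed beyond what the paper already does.
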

\begin{proof}
If $n_{\tG'}\not\in K_{d(\tG')}$, then by Lemma~\ref{lemma-growincl} there would exist a plane near-cubic graph $\tG_0=(G_0,v_0,\nu_0)$
such that $d(\tG_0)=7$, $n_{\tG_0}\not\in K_7$ and $G_0-v_0$ is an induced subgraph of $G'-v'$.
However, then $G_0-v_0$ would be a proper minor of $G-v$, contradicting the assumption that $\tG$ is extremal.
\end{proof}

Next, let us explore consequences of part (g) of Lemma~\ref{lemma-sets}.

\begin{lemma}\label{lemma-2conn}
If $\tG=(G,v,\nu)$ is an extremal plane near-cubic graph, then
$v$ is not incident with loops or parallel edges and $G-v$ is $2$-edge-connected.
\end{lemma}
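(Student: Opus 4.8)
The plan is to argue by contradiction. Assuming $\tG=(G,v,\nu)$ is extremal but one of the stated conclusions fails, I will exhibit a decomposition $\tG=\gamma_c(\tG_1,\tG_2)$ (possibly after a rotation of $\nu$, which is harmless because $K_7$ is closed under rotations by Lemma~\ref{lemma-sets}(c), so an extremal graph stays extremal under rotation) in which $G_1-v_1$ and $G_2-v_2$ are proper minors of $G-v$; then Lemma~\ref{lemma-nosg} gives $n_{\tG_i}\in K_{d(\tG_i)}$, and the appropriate combination rule from Lemma~\ref{lemma-sets} forces $n_{\tG}\in K_7$, contradicting extremality. Throughout I use that $n_{\tG}\notin K_7$ together with $0\in K_7$ means $n_{\tG}$ is not identically zero, so $\tG$ has a $3$-edge-coloring; in particular, by the standard parity argument invoked earlier in connection with bridges, $G$ itself has no bridge. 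The conclusions will be established in an order that lets later steps use earlier ones: first $G-v$ connected, then no loop at $v$, then $G-v$ bridgeless, and finally (as a consequence) no parallel edge at $v$.

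First I would show $G-v$ is connected. If not, planarity implies that for any partition of the components of $G-v$ into two nonempty parts the half-edges at $v$ leading to the two parts occupy disjoint cyclic arcs (otherwise a path inside one component joining two of its $v$-edges would enclose a vertex of the other part but not all of it, contradicting connectedness of that part); so after a rotation $\tG=\gamma_0(\tG_1,\tG_2)$ with $d(\tG_1)+d(\tG_2)=7$, and since $n_{\tG}\ne 0$ we have $d(\tG_i)\ge 2$ (as $\mathcal{P}_1=\emptyset$), hence $\{d(\tG_1),d(\tG_2)\}\in\{\{2,5\},\{3,4\}\}$. Now each $G_i-v_i$ is a proper induced subgraph of $G-v$, so $n_{\tG_i}\in K_{d(\tG_i)}$ by Lemma~\ref{lemma-nosg}, and Lemma~\ref{lemma-sets}(d) yields $n_{\tG}\in K_7$, a contradiction. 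Next I would rule out a loop at $v$: since $G-v$ is connected, some loop at $v$ bounds a disk containing no vertex of $G-v$, and taking such a loop innermost we may assume this disk contains no loop either, so the loop has consecutive half-edges at $v$ and $\tG=\gamma_0(\tilde{R}_{2,1},\tG')$ after a rotation, where $d(\tG')=5$ and $G'-v'=G-v$. If $n_{\tG'}\in K_5=B'_5$, then $n_{\tilde{R}_{2,1}}\in K_2$ and Lemma~\ref{lemma-sets}(d) give $n_{\tG}\in K_7$; otherwise Lemma~\ref{lemma-growincl} produces a plane near-cubic $\tG_0$ with $d(\tG_0)=7$, $n_{\tG_0}\notin K_7$, and $G_0-v_0$ an induced subgraph of $G-v$ on at most $|V(G)|-3$ vertices, hence a proper minor of $G-v$ — a contradiction in either case.

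The core of the proof is then that $G-v$ has no bridge; this also yields that $v$ has no parallel edge, since a triple edge $vu$ would isolate $u$ in the (connected) graph $G-v$, and a double edge $vu$ would leave $u$ with degree one in $G-v$, creating a bridge. So suppose $e=xy$ is a bridge of $G-v$ with $(G-v)-e=A\sqcup B$, $x\in A$, $y\in B$. Since $G$ has no bridge, $e$ is not a bridge of $G$, so $v$ is a cut-vertex of $G-e$ and therefore has $j$ edges into $V(A)$ and $7-j$ into $V(B)$ with $1\le j\le 6$; moreover $e$ is the only edge between $V(A)$ and $V(B)$. Planarity, together with the connectedness of $A$ and $B$, forces the $v$--$A$ and $v$--$B$ half-edges to form disjoint cyclic arcs, so after a rotation $\tG=\gamma_1(\tG_1,\tG_2)$ with $d(\tG_1)=j+1$ and $d(\tG_2)=8-j$. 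Taking the factor of smaller degree first we may assume $d(\tG_1)\in\{2,3,4\}$ and $d(\tG_2)\in\{5,6,7\}$, with $d(\tG_2)=7+2\cdot 1-d(\tG_1)$ and $1\le d(\tG_1)/2$. Since $G_1-v_1$ and $G_2-v_2$ are proper induced subgraphs of $G-v$, Lemma~\ref{lemma-nosg} gives $n_{\tG_1}\in K_{d(\tG_1)}$ and $n_{\tG_2}\in K_{d(\tG_2)}$, and Lemma~\ref{lemma-sets}(g) with $c=1$ yields $n_{\tG}\in K_7$, the final contradiction.

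I expect the main obstacle to be the planarity lemmas underlying the $\gamma_0$ and $\gamma_1$ decompositions — namely, that the half-edges at $v$ pointing into a prescribed part of $G-v$ always form a contiguous cyclic arc, and that a loop at $v$ may be chosen innermost so as to bound an empty disk — together with the bookkeeping needed to match the parameters $d(\tG_1),d(\tG_2),c$ exactly to the hypotheses of Lemma~\ref{lemma-sets}(d) and (g). Once these are in place, the remaining case analysis is routine, being driven entirely by the degree-$7$ constraint at $v$.
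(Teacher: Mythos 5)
Your proof is correct and follows essentially the same route as the paper: in each failure case you decompose $\tG$ as $\gamma_c(\tG_1,\tG_2)$ with $c\le 1$, apply Lemma~\ref{lemma-nosg} to the pieces, and conclude $n_{\tG}\in K_7$ via Lemma~\ref{lemma-sets}(d) and (g), using rotation-closure of $K_7$ to normalize the numbering. Your only deviations --- routing the loop case through Lemma~\ref{lemma-growincl} (which cleanly handles the situation where the loop bounds an empty disk, so that deleting it leaves $G-v$ unchanged and Lemma~\ref{lemma-nosg} cannot be applied to the large piece) and deducing the parallel-edge case from bridgelessness of $G-v$ rather than treating it directly --- are reorganizations of the same argument, and if anything slightly more careful than the paper's one-paragraph version.
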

\begin{proof}
Analogously to the proof of Lemma~\ref{lemma-growincl}, if $v$ were incident with a loop or a parallel edge or if $G-v$ were
not $2$-edge-connected, we would have $\tG=\gamma_c(\tG_1,\tG_2)$ for plane near-cubic graphs $\tG_1$ and $\tG_2$ such that
$2\le d(\tG_1)\le d(\tG_2)$, $d(\tG_1)+d(\tG_2)=7+2c$, and $c\le 1$; in particular, $d(\tG_2)\le 7$
and $d(\tG_1)\le \lfloor (7+2c)/2\rfloor \le 4$.
By Lemma~\ref{lemma-nosg}, we have $n_{\tG_i}\in K_{d(\tG_i)}$ for $i\in \{1,2\}$.
By Lemma~\ref{lemma-sets}(g), we conclude $n_{\tG}\in K_7$, which is a contradiction.
\end{proof}

Suppose $A$ and $B$ form a partition of the vertex set of a graph $H$, and let $S$ be the set
of edges of $H$ with one end in $A$ and the other end in $B$.  In this situation, we say
$S$ is an \emph{edge cut} of $H$ with sides $A$ and $B$.

\begin{lemma}\label{lemma-nosep3}
If $\tG=(G,v,\nu)$ is an extremal plane near-cubic graph, then
$G-v$ does not contain an edge cut $S$ such that $v$ has at least $|S|$ neighbors in each side of the cut.
\end{lemma}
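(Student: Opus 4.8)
The plan is to argue by contradiction, in the spirit of Lemmas~\ref{lemma-2conn} and~\ref{lemma-growincl}. Suppose $G-v$ contains an edge cut $S$ with sides $A$ and $B$ such that $v$ has at least $|S|$ neighbours in each side; I will realize $\tG$, up to a rotation, as $\gamma_k(\tG_1,\tG_2)$ for two plane near-cubic graphs $\tG_1,\tG_2$ strictly smaller than $\tG$, and then deduce $n_{\tG}\in K_7$ from parts~(g) and~(c) of Lemma~\ref{lemma-sets} together with Lemma~\ref{lemma-nosg}, contradicting the extremality of $\tG$. I first normalize $S$. By Lemma~\ref{lemma-2conn}, $v$ has $7$ distinct neighbours and $G-v$ is $2$-edge-connected, so every edge cut of $G-v$ with both sides nonempty has at least two edges; among all cuts of $G-v$ with sides $A,B$ both nonempty and with $v$ having at least $|S|$ neighbours in each side, I would choose one with $k:=|S|$ minimum. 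Writing $a,b$ for the numbers of neighbours of $v$ in $A,B$, we have $a+b=7$ and $2\le k\le\min(a,b)$, so up to swapping the sides $(a,b,k)\in\{(2,5,2),(3,4,2),(3,4,3)\}$.

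Next I would show that, for this minimal cut, $G[A]$ and $G[B]$ are connected. If $G[A]$ were disconnected, I would group its components into nonempty $A_1,A_2$; then $S$ is the disjoint union of the cuts $S_i:=E_{G-v}(A_i,B)$, which are both nonempty (a side with empty $S_i$ would be a union of components of the connected graph $G-v$), so $|S_i|<k$. The cut $(A_i,B\cup A_{3-i})$ has all of $B$ on one side, hence more than $|S_i|$ neighbours of $v$ there, so by minimality of $k$ it is not of the required form, which forces $v$ to have at most $|S_i|-1$ neighbours in $A_i$; summing over $i$ gives $a\le(|S_1|-1)+(|S_2|-1)=k-2$, contradicting $a\ge k$. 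The symmetric argument, using $b\ge k$, handles $G[B]$.

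Then I would use planarity to show that the edges of $v$ leading to $A$ are consecutive in the rotation at $v$. If not, there are edges $vx_f,vx_g$ with $x_f,x_g\in A$ and $vy_{f'},vy_{g'}$ with $y_{f'},y_{g'}\in B$ occurring in this cyclic order around $v$; since $v$ has no parallel edge, $x_f\ne x_g$, and a path joining $x_f$ to $x_g$ in the connected graph $G[A]$ closes up with $vx_f$ and $vx_g$ into a cycle $\Gamma$ through $v$ whose remaining vertices all lie in $A$. The curve $\Gamma$ divides the sphere into two disks, and the cyclic order places the ends $y_{f'},y_{g'}$ in different disks (they lie in $B$, hence off $\Gamma$); but $G[B]$ is connected, so some path joins them off $\Gamma$, which is impossible for a connected set meeting both disks. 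Hence the $A$-edges of $v$, and likewise the $B$-edges, are consecutive. After rotating $\tG$ (legitimate since $K_7$ is closed under rotations, Lemma~\ref{lemma-sets}(c)) I may assume the $A$-edges are $\nu^{-1}(0),\dots,\nu^{-1}(a-1)$. Cutting the embedding of $G$ along a simple closed curve through $v$ that separates the $A$-edges from the $B$-edges and crosses exactly the $k$ edges of $S$ — such a curve exists because $G[A],G[B]$ are connected, as one sees after contracting $A$ and $B$ to single vertices — exhibits $\tG=\gamma_k(\tG_1,\tG_2)$, where $\tG_1$ is the plane near-cubic graph on the $A$-side with $G_1-v_1=G[A]$ and $d(\tG_1)=a+k$, and $\tG_2$ is the one on the $B$-side with $G_2-v_2=G[B]$ and $d(\tG_2)=b+k$; thus $(d(\tG_1),d(\tG_2),k)\in\{(4,7,2),(5,6,2),(6,7,3)\}$.

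To conclude: in each of the three cases $d(\tG_1)\le 6$, $k\le d(\tG_1)/2$, and $d(\tG_2)=7+2k-d(\tG_1)$, so Lemma~\ref{lemma-sets}(g) gives $\gamma_k(n_{\tG_1},n_{\tG_2})\in K_7$; and since $A,B$ are nonempty, $G_1-v_1$ and $G_2-v_2$ are proper minors of $G-v$ with $d(\tG_i)\le 7$, so $n_{\tG_i}\in K_{d(\tG_i)}$ by Lemma~\ref{lemma-nosg}. Hence $n_{\tG}$, being a rotation of $\gamma_k(n_{\tG_1},n_{\tG_2})$, lies in $K_7$ by Lemma~\ref{lemma-sets}(c), contradicting $n_{\tG}\notin K_7$. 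I expect the main obstacle to be this planar-topology step — choosing the cut so that both sides are connected and the neighbours of $v$ on each side form a consecutive arc in the rotation, which is precisely what allows $\tG$ to be split as a $\gamma_k$-sum of the two sides along a single curve through $v$; once that structure is in hand, the rest is routine bookkeeping against Lemma~\ref{lemma-sets}.
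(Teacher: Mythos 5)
Your proof is correct and follows the same route as the paper: realize $\tG$ (up to rotation) as $\gamma_c(\tG_1,\tG_2)$ along the cut, note the degree constraints force the parameters allowed in Lemma~\ref{lemma-sets}(g), and combine with Lemma~\ref{lemma-nosg} to get $n_{\tG}\in K_7$, contradicting extremality. The only difference is that you spell out the planarity bookkeeping (passing to a minimum cut, connectivity of both sides, consecutiveness of the edges at $v$) that the paper's one-line ``and thus $\tG=\gamma_c(\tG_1,\tG_2)$'' leaves implicit, which is a faithful filling-in of detail rather than a different argument.
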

\begin{proof}
Suppose for a contradiction $G-v$ contains such an edge cut $S$ of size~$c$, and thus
$\tG=\gamma_c(\tG_1,\tG_2)$ for plane near-cubic graphs $\tG_1$ and $\tG_2$ such that
$2c\le d(\tG_1)\le d(\tG_2)$ and $d(\tG_1)+d(\tG_2)=7+2c$.  Since $v$ has $7$ neighbors
and at least $c$ of them are contained in each of the sides of the cut, we have $c\le 3$.
Note that $d(\tG_2)\le 7$ and $d(\tG_1)\le \lfloor (7+2c)/2\rfloor \le 6$.
By Lemma~\ref{lemma-nosg}, we have $n_{\tG_i}\in K_{d(\tG_i)}$ for $i\in\{1,2\}$.
By Lemma~\ref{lemma-sets}(g), we conclude $n_{\tG}\in K_7$, which is a contradiction.
\end{proof}

An edge cut $S$ of size at most five in a near-cubic graph $\tG=(G,v,\nu)$ is \emph{essential} if
the side of $S$ containing $v$ contains at least one other vertex and the other side $B$ of $S$ induces neither a tree nor a $5$-cycle.
\begin{lemma}\label{lemma-nones}
If $\tG=(G,v,\nu)$ is an extremal plane near-cubic graph, then
$\tG$ does not contain an essential edge cut $S$ of size at most five.
\end{lemma}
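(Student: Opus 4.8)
The strategy is to show that an essential edge cut lets us decompose $\tG$ the same way as the cuts in Lemmas~\ref{lemma-2conn} and~\ref{lemma-nosep3}, and then apply one of parts (d), (e), (f), (g), or (h) of Lemma~\ref{lemma-sets} to conclude $n_{\tG}\in K_7$, contradicting extremality. So suppose $\tG=(G,v,\nu)$ is extremal and $S$ is an essential edge cut of size $c\le 5$, with $v$ in the side $A$; let $B$ be the other side. Because $S$ is essential, $A$ contains a vertex other than $v$ and $B$ induces neither a tree nor a $5$-cycle. Since $\tG$ is plane, the cut $S$ separates the plane, so writing $\tG=\gamma_c(\tG_1,\tG_2)$ is legitimate, where $\tG_1=(G_1,v_1,\nu_1)$ contains $A$ together with $v$ (so $v_1$ plays the role of $v$) and $\tG_2=(G_2,v_2,\nu_2)$ is obtained from $B$ by contracting $S$ to a new apex vertex $v_2$ of degree $c$; here $\gamma_c$ is the gluing operation from the Evidence section, and by the relevant Observation $n_{\tG}=\gamma_c(n_{\tG_1},n_{\tG_2})$.

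The first key point is a degree count on $\tG_2$. Since every vertex of $B$ has degree $3$ in $G$ and $c$ of those incidences go to $S$, we have $d(\tG_2)=c$ if we collapse $S$ naively; more carefully, $\tG_2$ is a genuine near-cubic graph with $d(\tG_2)\le c\le 5$. I would then argue that $d(\tG_2)\ne 1$ (else $B$ would contribute a bridge and the parity argument makes $n_{\tG_2}$, hence $n_{\tG}$, identically zero, contradicting $n_{\tG}\not\in K_7$), and that $d(\tG_2)\notin\{0\}$ trivially. When $d(\tG_2)\in\{2,3\}$ with $B$ a tree, that case is excluded by the essentiality hypothesis; similarly a $5$-cycle in $B$ is excluded. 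The upshot is that after possibly simplifying (removing a length-$2$ or length-$3$ part of $B$ using Lemma~\ref{lemma-sets}(e),(f) as in Lemma~\ref{lemma-growincl}, or handling it directly), we may assume $2\le d(\tG_2)\le 5$ with both $G_1-v_1$ and $G_2-v_2$ proper minors of $G-v$; the proper-minor claim for $\tG_2$ uses that $B$ is not a single $5$-cycle and not a tree, so contracting $S$ genuinely loses structure, and for $\tG_1$ it uses that $A$ has a vertex besides $v$.

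Now Lemma~\ref{lemma-nosg} gives $n_{\tG_1}\in K_{d(\tG_1)}$ and $n_{\tG_2}\in K_{d(\tG_2)}$, provided $d(\tG_1),d(\tG_2)\le 7$. Since $d(\tG_1)+d(\tG_2)=7+2c$ and $d(\tG_2)=c$ (so $d(\tG_1)=7+c$)... wait, that is too large for $c\ge 1$; the correct bookkeeping, matching the proof of Lemma~\ref{lemma-nosep3}, is $d(\tG_1)+d(\tG_2)=d(\tG)+2c=7+2c$ with $c\le d(\tG_2)\le d(\tG_1)$, so $d(\tG_2)\le \lfloor(7+2c)/2\rfloor$; combined with $c\le 5$ and the refinements above one checks $d(\tG_2)\le 6$ and $d(\tG_1)\le 7$ in every surviving case, or else reduces $c$ first. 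Then the appropriate instance of Lemma~\ref{lemma-sets}(g) (which covers $\gamma_c$ with $1\le c\le d_1/2$ and output in $K_7$), or (d) when $c=0$, yields $n_{\tG}=\gamma_c(n_{\tG_1},n_{\tG_2})\in K_7$, the desired contradiction.

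**Main obstacle.** The delicate part is the case analysis on $c$ and on the structure of $B$: I expect the bound $c\le 5$ on its own not to directly slot into Lemma~\ref{lemma-sets}(g), whose hypothesis caps one side at $d_1\le 6$ and ties the two sizes together as $d_1+d_2=7+2c$. So for each value of $c\in\{1,\dots,5\}$ I will need to verify that, after peeling off trees and $5$-cycles from $B$ via the reductions already used in Lemma~\ref{lemma-growincl}, the remaining decomposition has side-degrees within the ranges demanded by part (g) (or falls to part (d) or (h)); the excluded configurations in the definition of ``essential'' are presumably exactly what is needed to prevent the one or two degenerate degree patterns that part (g) cannot absorb. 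Pinning down that correspondence precisely—and checking that ``proper minor'' is preserved in each reduction step so Lemma~\ref{lemma-nosg} keeps applying—is where the real work lies; everything else is a routine invocation of Lemma~\ref{lemma-sets} and the gluing observations.
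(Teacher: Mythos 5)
Your proposal attacks the lemma with the same decomposition-plus-Lemma~\ref{lemma-sets} machinery as Lemmas~\ref{lemma-2conn} and~\ref{lemma-nosep3}, but this is exactly the situation that machinery cannot handle, and the bookkeeping problem you noticed mid-proof is fatal rather than fixable. If the essential cut $S$ has size $c\le 5$ and $b$ of its edges are incident with $v$, then the natural decomposition is $\tG=\gamma_a(\tG_1,\tG_2)$ with gluing number $a=c-b$, where the side containing $B$ has $d(\tG_2)=c\le 5$ but the side containing $v$ has $d(\tG_1)=7+c-2b$. An essential cut may lie entirely inside $G-v$, far from $v$ (indeed that is the situation Lemma~\ref{lemma-nosep3} cannot reach, which is why this lemma exists), in which case $b=0$ and $d(\tG_1)=7+c$ can be as large as $12$. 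No cone $K_d$ with $d>8$ is defined, Lemma~\ref{lemma-nosg} requires $d\le 7$, and Lemma~\ref{lemma-sets}(g) only applies when $a\le c/2$, i.e.\ when at least half the cut edges meet $v$; part (h) concerns $\gamma_4(K_8,K_7)$, which corresponds to a cut of size $7$, not $\le 5$. Peeling trees or $5$-cycles off $B$ via parts (e),(f) does nothing to lower $d(\tG_1)$. So ``one checks $d(\tG_1)\le 7$ in every surviving case'' is simply false, and your plan has no tool for the generic case.

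The paper's proof uses a different idea that is absent from your proposal: one does not try to place $n_{\tG}$ in $K_7$ at all, but instead exploits extremality with respect to proper minors. Choose $S$ with $k=|S|$ minimum and the far side $B$ minimal; show $G[B]$ is $2$-edge-connected and that all edges of $S$ attach inside the face of $G[B]$ containing $v$ (this uses the minimality and the tree/$5$-cycle exclusions in the definition of essential). Let $\tG_c$ be $B$ with the $v$-side contracted to an apex of degree $k$; by Lemma~\ref{lemma-nosg}, $n_{\tG_c}\in K_k$, and since $K_k$ for $k\le 5$ is generated by the rays $\ray(\tilde{R}_{k,i})$ (with $i\le 11$ when $k=5$, so only \emph{planar} generators), one writes $n_{\tG_c}=\sum_i c_i n_{\tilde{R}_{k,i}}$. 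Replacing $B$ by $\tilde{R}_{k,i}$ gives graphs $\tG_i$ with $n_{\tG}=\sum_i c_i n_{\tG_i}$ by linearity; since $K_7$ is a cone and $n_{\tG}\notin K_7$, some $n_{\tG_i}\notin K_7$, while the face/2-edge-connectivity analysis guarantees $G_i-v_i$ is a proper minor of $G-v$ (here is where ``$B$ is neither a tree nor a $5$-cycle'' and the presence of the cycle $C_S$ are used), contradicting extremality. This replacement-by-rays argument, together with the minimality and planarity preparations that make the minor claim true, is the substance of the proof and is missing from your plan.
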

\begin{proof}
Suppose for a contradiction that $\tG$ contains an essential edge-cut $S$ of size $k\le 5$,
and choose one with minimum $k$, and subject to that one for which the side $B$ not containing $v$ is minimal.
We claim $G[B]$ is $2$-edge-connected. Otherwise, $B$ is a disjoint union of non-empty
sets $B_1$ and $B_2$, where $G$ contains $r\le 1$ edges with one end in $B_1$ and the other end in $B_2$.
For $i\in\{1,2\}$, let $S_i$ denote the set of edges of $G$ with exactly one end in $B_i$.
Since $\tG$ is extremal, $n_{\tG}\not\in K_7$ is not identically zero, and thus $G$ is $2$-edge-connected,
implying $|S_i|\ge 2$.  Hence, $|S_i|=k+2r-|S_{3-i}|\le k$.  By the minimality of $B$, we conclude that
$B_i$ induces a tree or a $5$-cycle, and thus $|S_i|\ge 3$.  Hence $5\ge k=|S_1|+|S_2|-2r\ge 6-2r$, and thus
$r=1$ and $|S_1|,|S_2|\le 4$.  This implies that neither $B_1$ nor $B_2$ induces a $5$-cycle, and
thus both of them induce trees; and $G$ contains an edge between them, implying that $B$ induces a tree,
contrary to the assumption that $S$ is an essential edge cut.

Since $G[B]$ is $2$-edge-connected and subcubic, each face of $G[B]$ is bounded by a cycle.
Let $C_S$ denote the cycle bounding the face $f$ of $G[B]$ whose interior contains $v$.
Observe that all edges of $S$ are drawn inside $f$.  Otherwise, the set $S'$ of edges of $S$ drawn inside
$C$ forms an edge cut of order smaller than $k$ and by the minimality of $k$, its side $B'\supsetneq B$
induces a tree or a $5$-cycle; this is not possible, since $G[B]$ is $2$-edge connected
and not a tree.

Let $\tG_c$ be the plane near-cubic graph obtained from $G$ by contracting the side of the cut
containing $v$ to a single vertex. By Lemma~\ref{lemma-nosg}, we have
$n_{\tG_c}\in K_k$.  Since $K_d=B_d$ for $d\le 4$ and $K_5=B'_5$,
$$n_{\tG_c}=\sum_i c_in_{\tilde{R}_{k,i}},$$
where $i\le 11$ if $k=5$ and the coefficients $c_i$ are non-negative.
Let $\tG_i=(G_i,v_i,\nu_i)$ denote the plane near-cubic graph obtained from $\tG$ by replacing the side of the cut $S$ not containing $v$
by $\tilde{R}_{k,i}$.
Note that $n_{\tG}=\sum_i c_in_{\tG_i}$, and since $K_7$ is a cone and $n_{\tG}\not\in K_7$,
there exists $i$ such that $n_{\tG_i}\not\in K_7$.  
Because $B$ contains the cycle $C_S$ and all edges of $S$ are incident with vertices of $C_S$,
we see $G_i-v_i$ is a proper minor of $G-v$, contradicting the extremality of $\tG$.
\end{proof}

In Lemma~\ref{lemma-2conn}, we argued that if $\tG=(G,v,\nu)$ is an extremal plane near-cubic graph, then
$G-v$ is $2$-edge-connected, and thus its face containing $v$ is bounded by a cycle $C$.
Let us now argue that the graph stays $2$-edge-connected after removing $V(C)$ as well.

\begin{lemma}\label{lemma-2conn-rem}
Let $\tG=(G,v,\nu)$ be an extremal plane near-cubic graph and let $C$ be the cycle bounding the face of $G-v$ containing $v$.
The cycle $C$ is induced, no two neighbors of $v$ in $C$ are adjacent, and the graph $G-(V(C)\cup \{v\})$ is $2$-edge-connected
and has more than one vertex.
\end{lemma}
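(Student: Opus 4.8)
The plan is to prove each of the four assertions by contradiction from the extremality of $\tG$, with Lemmas~\ref{lemma-nosep3} and~\ref{lemma-nones} (and, through them, Lemma~\ref{lemma-sets}) doing the real work. First I would fix notation. By Lemma~\ref{lemma-2conn}, $v$ is incident with no loop or parallel edge and $G-v$ is $2$-edge-connected, so the face of $G-v$ containing $v$ is bounded by a cycle $C=c_0c_1\cdots c_{k-1}$, the seven neighbours of $v$ are distinct vertices of $C$, and $k\ge 7$. A vertex of $C$ adjacent to $v$ has degree $2$ in $G-v$ and sends no edge off $C$; a vertex of $C$ not adjacent to $v$ has degree $3$ in $G-v$ and sends exactly one edge off $C$. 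Write $G'=G-(V(C)\cup\{v\})$.

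The claim that \emph{no two neighbours of $v$ on $C$ are adjacent} comes quickly: two adjacent neighbours of $v$ would have to be consecutive on $C$, say $c_a,c_{a+1}$, and then $\{c_{a-1}c_a,\,c_{a+1}c_{a+2}\}$ is a $2$-edge cut of $G-v$ with two neighbours of $v$ on one side and at least five on the other, so $v$ has at least $2=|S|$ neighbours on each side, contradicting Lemma~\ref{lemma-nosep3}. For \emph{$C$ is induced}, suppose $c_ic_j$ is a chord. Then $c_i,c_j$ are not adjacent to $v$, and the chord cuts the closed disc bounded by $C$ (the side away from $v$) into two regions, splitting $V(C)$ into arcs $P_1$ (from $c_i$ to $c_j$) and $P_2$ with $N(v)\subseteq P_1^\circ\cup P_2^\circ$; label them so that $t:=|N(v)\cap P_1^\circ|\le 3$. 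Let $B$ consist of $V(P_1)$ together with all vertices drawn strictly inside the $P_1$-region. A short planarity computation shows the edge cut of $G$ between $B$ and the rest is exactly the two $C$-edges leaving $c_i$ and $c_j$ toward $P_2$, together with the $t$ edges from $v$ into $P_1^\circ$, so it has size at most $2+t\le 5$; the side containing $v$ has more than one vertex because $P_2^\circ\neq\emptyset$; and $G[B]$ contains the cycle $P_1+c_ic_j$, so it is not a tree. If $G[B]$ were a $5$-cycle, then $B=V(P_1)$ and $P_1$ has exactly three interior vertices, each of degree $2$ in $G-v$ and hence adjacent to $v$, producing two consecutive neighbours of $v$ on $C$, which we have just excluded. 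Hence this cut is essential, contradicting Lemma~\ref{lemma-nones}.

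Next I would pin down the coarse structure of $G'$. Since $C$ is induced, each of the $k-7$ vertices of $C$ not adjacent to $v$ sends its one extra edge into $G'$, so there are exactly $k-7$ edges between $C$ and $G'$. Moreover $V(G')\neq\emptyset$: otherwise $G-v=C$, every vertex of $C$ is adjacent to $v$, and we get seven pairwise non-consecutive vertices on a $7$-cycle, which is impossible; and connectedness and bridgelessness of $G-v$ force each component of $G'$ to meet $C$ in at least two edges, so $k\ge 9$. Finally $|V(G')|\ge 2$: if $V(G')=\{w\}$, then the three neighbours of $w$ lie on $C$ (it cannot be adjacent to $v$) and are precisely the non-neighbours of $v$ there, so $|V(C)|=10$, and the remaining seven vertices of $C$ form an independent set of size $7$ in $C_{10}$ — impossible.

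The remaining assertion, \emph{$G'$ is $2$-edge-connected}, is where the bulk of the work lies. Suppose $G'$ has an edge cut of size at most $1$, and choose one whose smaller side $X$ in $G'$ is minimal; then $G'[X]$ is either a single vertex or $2$-edge-connected, since otherwise a bridge inside $X$ would be either a bridge of $G'$ with a strictly smaller side or (if its small part had no $C$-neighbour) a bridge of $G-v$. Let $a\ge 1$ be the number of $C$--$X$ edges, and note $X$ sends at most one more edge to $V(G')\setminus V(X)$. Using planarity (the $X$-attachments and the neighbours of $v$ lie along $C$ in a controlled, non-interleaved way) together with the facts that $v$ has only seven neighbours and, by the second step, no two consecutive on $C$, one can cut off $V(X)$ together with a suitable arc $A$ of $C$ carrying the $X$-attachments so that the resulting edge cut of $G$ has size at most five: the contributions are two $C$-edges at the ends of $A$, at most three edges from $v$ into $A^\circ$, and at most one edge from $X$ to the rest of $G'$, and when $X$ itself is small or has several scattered attachments one absorbs one or two more vertices of $C$ (adjacent to an attachment point) into the enclosed side to keep control. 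The enclosed side contains a cycle (the arc $A$ plus a path through $X$), so it is not a tree, and the absorbed vertices — or the fact that every vertex of $X$ has degree $3$ — prevent it from being a $5$-cycle; so the cut is essential and Lemma~\ref{lemma-nones} yields the contradiction. I expect the main obstacle to be exactly this last step: organizing the case analysis so that across all the configurations ($X$ a single vertex, $X$ inducing a $5$-cycle, $X$ attaching to $C$ at several scattered points, another component of $G'$ attaching inside $A$) one always lands on a cut that is simultaneously of size at most five and essential, rather than (say) of size six or enclosing a bare $5$-cycle. The earlier three steps, by contrast, are essentially forced once Lemmas~\ref{lemma-nosep3} and~\ref{lemma-nones} are in hand.
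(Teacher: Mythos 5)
Your arguments for the first three assertions are sound and in the same spirit as the paper's (the paper packages them into a single framework: any closed curve crossing two edges of $C$, $b$ edges at $v$ and $r\le 1$ further edges of $G-v$ yields, via Lemma~\ref{lemma-nosep3}, $b\le r+1$, hence a cut of size at most $3+2r\le 5$, which Lemma~\ref{lemma-nones} then forces into one of three degenerate shapes). But the fourth assertion, that $G-(V(C)\cup\{v\})$ is $2$-edge-connected, is exactly where your proposal stops being a proof, and the sketch you give has a genuine gap rather than just unfinished bookkeeping. The missing idea is how to guarantee the cut has size at most five: you choose $X$ to be a \emph{minimal} side of a small edge cut of $G'=G-(V(C)\cup\{v\})$, but minimality of $X$ inside $G'$ gives no control whatsoever over how many of the seven neighbours of $v$ lie on the arc of $C$ spanned by the attachments of $X$; that arc could contain up to all seven of them, and then your cut has size up to $2+7+1$. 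Your parenthetical appeal to planarity (``the $X$-attachments and the neighbours of $v$ lie along $C$ in a controlled, non-interleaved way'') is not a true statement: attachments of $X$ and neighbours of $v$ can interleave arbitrarily. The correct planarity fact, which the paper uses implicitly, is that the attachments on $C$ of the \emph{two sides of the bridge} (or of two components, in the disconnected case) occupy disjoint arcs of $C$, because two vertex-disjoint connected subgraphs drawn inside the disc bounded by $C$ cannot have interleaved attachments; consequently one of the two sides has a spanning arc containing at most three neighbours of $v$, and it is that side — not the side that is smaller in $G'$ — that one must cut off. Without this choice the bound $|S|\le 5$ needed for Lemma~\ref{lemma-nones} simply fails.

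The second unresolved point you yourself flag, the possibility that the enclosed side induces a bare $5$-cycle, is also not handled by your suggested reason (``every vertex of $X$ has degree $3$'' does not prevent it — this configuration genuinely occurs as an outcome of the cut analysis). In the paper it is eliminated by a different mechanism: after Lemma~\ref{lemma-nosep3} forces $b\le 2$ when $r=1$, the only surviving non-essential outcome with a vertex of $G'$ inside is that $G[B]$ is an induced $5$-cycle consisting of a subpath $x_1x_2x_3x_4$ of $C$ plus one vertex $w\notin V(C)$; then the third edges of $x_2$ and $x_3$ must leave $B$, and since the cut contains only one edge avoiding both $v$ and $E(C)$ (used by $w$), both of these edges must go to $v$, so $x_2$ and $x_3$ are adjacent neighbours of $v$ — contradicting the part of the lemma you had already proved. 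So the case analysis does close, but it needs these two specific ingredients (the disjoint-arcs/choice-of-side argument to get $|S|\le 5$, and the reuse of the ``no two adjacent neighbours of $v$'' statement to kill the $5$-cycle outcome), neither of which appears in your write-up; as it stands, the main claim of the lemma is not proved.
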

\begin{proof}
Consider a simple closed curve $c$ in the plane intersecting $G$ in two edges of $C$,
$b\le 4$ edges incident with $v$, and $r\le 1$ edges of $E(G-v)\setminus E(C)$,
where each edge is intersected at most once.
The curve $c$ separates the plane into two parts; let $A$ and $B$ be the corresponding
partition of vertices of $G$, where $v\in A$, and let $S$ be the edge cut in $G$ consisting of the edges with one end in $A$ and the
other end in $B$.  By Lemma~\ref{lemma-nosep3} applied to the edge cut in $G-v$ obtained from $S$ by removing the edges incident with $v$,
it follows that $b\le r+1$, and thus $|S|\le 3+2r\le 5$.
By Lemma~\ref{lemma-nones} we conclude that the edge cut satisfies one of the following conditions.
\begin{itemize}
\item $r=0$, $b=1$, $|S|=3$, and $B$ consists of a single vertex of $C$, or
\item $r=1$ and $G[B]$ is a subpath of $C$, or
\item $r=1$, $b=2$, and $G[B]$ is a $5$-cycle containing exactly one vertex not in $V(C)$.
\end{itemize}
If $C$ had a chord $e$, this would give a contradiction by considering a curve $c$ (with $r=0$) drawn next to the chord
so that $e\in E(G[B])$ and $b\le 3$; hence, $C$ is an induced cycle.  If two neighbors of $v$ in $C$ were adjacent,
we would obtain a contradiction by considering a curve $c$ (with $r=0$ and $b=2$) drawn around them.
If the graph $G-(V(C)\cup \{v\})$ were not connected, we would obtain a contradiction by considering a curve $c$
(with $r=0$ and $b\le 3$) chosen so that both $A$ and $B$ contain a vertex of $G-(V(C)\cup \{v\})$.
Finally, if the graph $G-(V(C)\cup \{v\})$ were not $2$-edge-connected, then we could choose $c$ so that $r=1$, $b\le 3$, and
$B$ contains a vertex of $G-(V(C)\cup \{v\})$.  But then $G[B]$ would be a $5$-cycle containing exactly one vertex not in $V(C)$
and consequently two adjacent vertices of $C$ would be neighbors of $v$, which is a contradiction.

Therefore, the graph $G-(V(C)\cup \{v\})$ is $2$-edge-connected.  Since no two neighbors of $v$ in $C$ are adjacent,
$G$ contains at least $7$ edges between $V(C)$ and $V(G)\setminus(V(C)\cup \{v\})$, and thus $G-(V(C)\cup \{v\})$ has more than one vertex.
\end{proof}

Finally, let us apply the parts (h) and (i) of Lemma~\ref{lemma-sets}.

\begin{lemma}\label{lemma-minc}
If $\tG=(G,v,\nu)$ is an extremal plane near-cubic graph, then $G$ has at least $28$ vertices.
\end{lemma}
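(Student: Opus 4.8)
The plan is to suppose that $\tG=(G,v,\nu)$ is an extremal plane near-cubic graph with at most $27$ vertices and to derive a contradiction by producing a decomposition of $\tG$ of a form controlled by Lemma~\ref{lemma-sets}, forcing $n_{\tG}\in K_7$. First I would collect the structure furnished by Lemmas~\ref{lemma-2conn} and~\ref{lemma-2conn-rem}: the vertex $v$ has seven pairwise distinct neighbours, all lying on the induced cycle $C$ bounding the face of $G-v$ containing $v$, no two of them consecutive on $C$, and $H:=G-(V(C)\cup\{v\})$ is $2$-edge-connected with at least two vertices. Each vertex of $C$ has, besides its two edges on $C$, a third edge leading to $v$ or to $H$; exactly seven of these go to $v$, so exactly $m:=|C|-7\ge 7$ go to $H$, landing on $m$ distinct vertices of $H$, which are then precisely the degree-two vertices of $H$. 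Writing $b$ for the (necessarily even) number of degree-three vertices of $H$ gives $|V(G)|=1+|C|+|V(H)|=1+(m+7)+(m+b)=2m+b+8$. With $|V(G)|\le 27$, $m\ge 7$ and $b$ even, only $(m,b)\in\{(7,0),(7,2),(7,4),(8,0),(8,2),(9,0)\}$ remain; in particular $m\le 9$, $b\le 4$, and $H$ is a subdivision of a $2$-edge-connected cubic multigraph on at most four vertices. Moreover the seven neighbours of $v$ partition $C$ into seven gaps of total size $m\le 9$, so at least $14-m\ge 5$ gaps have size one, and since $\alpha(C_7)=3$ two of these unit gaps are cyclically consecutive.

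Next I would use this to cut. Let $u_0,u_1,u_2$ be three consecutive neighbours of $v$ with $w_0$ the unique gap vertex between $u_0$ and $u_1$ and $w_1$ the unique one between $u_1$ and $u_2$, and let $w_0x_0$ and $w_1x_1$ be the edges joining $w_0,w_1$ to $H$. A simple closed curve that passes through $v$ (separating the spokes $vu_0,vu_1,vu_2$ from the other four), follows the ``crescent'' $\{u_0,w_0,u_1,w_1,u_2\}$ on the side away from $v$, and crosses exactly the two $C$-edges bounding this crescent together with $w_0x_0$ and $w_1x_1$ realises $\tG$ (up to labelling conventions, which are harmless as the relevant cone is closed under rotations and flips) as $\gamma_4(\tG_A,\tG_B)$, where $d(\tG_A)=8$ and $G_A-v_A=G-\{v,u_0,w_0,u_1,w_1,u_2\}$, and $d(\tG_B)=7$ and $G_B-v_B$ is the path $u_0w_0u_1w_1u_2$. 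Both vertex-deleted graphs are proper minors of $G-v$, so Lemma~\ref{lemma-nosg} gives $n_{\tG_B}\in K_7$. If moreover $n_{\tG_A}\in K_8$, then Lemma~\ref{lemma-sets}(h) yields $n_{\tG}=\gamma_4(n_{\tG_A},n_{\tG_B})\in K_7$, contradicting $n_{\tG}\notin K_7$. To get $n_{\tG_A}\in K_8$ I would appeal to Lemma~\ref{lemma-sets}(i): it suffices to realise the degree-eight graph $\tG_A$ in the form $r_2(\gamma_2(\tilde{H}_1,\tilde{H}_2))$ with $d(\tilde{H}_1)=d(\tilde{H}_2)=6$ and $G_{\tilde{H}_i}-v_{\tilde{H}_i}$ proper minors of $G-v$ (whence $n_{\tilde{H}_i}\in K_6$ by Lemma~\ref{lemma-nosg}), i.e.\ to find a curve through $v_A$ splitting its eight spokes into two consecutive blocks of four and crossing exactly two edges of $G_A-v_A$.

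The hard part will be precisely this last step. The $\gamma_4$-decomposition is essentially automatic once the two consecutive unit gaps are in hand, but the $\gamma_2$-decomposition of $\tG_A$ demanded by Lemma~\ref{lemma-sets}(i) requires a sufficiently sparse edge cut of $G_A-v_A=G-\{v,u_0,w_0,u_1,w_1,u_2\}$ compatible with the cyclic positions of $v_A$'s neighbours, and such a cut need not exist for an arbitrary choice of crescent (for example when $G_A-v_A$ is $3$-edge-connected). I expect to resolve this by a case analysis over the six values of $(m,b)$ and the few shapes of $H$, in which---using the absence of small essential and separating edge cuts from Lemmas~\ref{lemma-nosep3} and~\ref{lemma-nones}, and, when it is more convenient, cutting off a differently chosen small piece in place of a crescent---one always produces a decomposition of $\tG$ handled either directly by Lemma~\ref{lemma-sets}(g) or by Lemma~\ref{lemma-sets}(i) followed by~(h). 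Every such decomposition contradicts the extremality of $\tG$, so an extremal plane near-cubic graph has at least $28$ vertices.
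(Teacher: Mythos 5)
Your setup is sound and consistent with the paper's structural lemmas: the count $|V(G)|=2m+b+8$ (with $m=|C|-7\ge 7$ the number of degree-two vertices of $H=G-(V(C)\cup\{v\})$ and $b$ the even number of degree-three vertices), the resulting list of pairs $(m,b)$ under $|V(G)|\le 27$, the existence of two consecutive unit gaps, and the realisation of $\tG$ as $\gamma_4(\tG_A,\tG_B)$ with $d(\tG_A)=8$, $d(\tG_B)=7$ and $G_B-v_B$ a path are all correct, and you correctly identify that $n_{\tG_B}\in K_7$ comes from Lemma~\ref{lemma-nosg} while $n_{\tG_A}\in K_8$ can only come from Lemma~\ref{lemma-sets}(i), since Lemma~\ref{lemma-nosg} stops at $d\le 7$. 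But the proof is not complete: the entire substance of the lemma lies in showing that the degree-eight piece really can be written as $r_2(\gamma_2(\tilde H_1,\tilde H_2))$ with two degree-six pieces whose vertex-deleted graphs are proper minors of $G-v$ (or that some alternative decomposition covered by Lemma~\ref{lemma-sets}(g) exists), and you explicitly defer this to an unperformed case analysis over the six values of $(m,b)$, even conceding that for your chosen crescent such a $\gamma_2$-cut need not exist. That deferred step is exactly where the paper does its work: it proves the structural claim that no face separated from $v$ by the inner cycle $C'$ can ``see'' three alternate paths $P_i$, $P_{i+2}$, $P_{i+4}$ (this is what yields the $\gamma_4(r_2(\gamma_2(\tG_1,\tG_2)),\tG_3)$ decomposition via parts (h) and (i)), and then converts that claim into the lower bound by counting edges in the four classes $b_1,\dots,b_4$, obtaining $|E(G)|\ge 44$ and hence $|V(G)|\ge 28$. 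Without carrying out your case analysis (for instance, the case $m=9$, $b=0$, where $H$ is a $9$-cycle and $|V(G)|=26$, needs a genuine argument that a suitable cut exists), the proposal does not establish the bound.

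One further caution for when you do the case analysis: Lemma~\ref{lemma-sets}(c) guarantees closure under rotations and flips only for $d\le 7$, so you cannot wave away labelling issues for the degree-eight piece by symmetry; you must produce $n_{\tG_A}$ in $K_8$ with the glued half-edges in the exact positions required by the definition of $\gamma_4$, which is why the paper applies part (i) with the specific rotation $r_2$ rather than invoking any symmetry of $K_8$. You do aim at the right normal form $r_2(\gamma_2(\cdot,\cdot))$, but the remark that labelling conventions are ``harmless'' because the cone is rotation- and flip-closed is not justified at $d=8$.
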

\begin{proof}
Recall that by the definition of extremal, $d(\tG)=7$.
By Lemma~\ref{lemma-2conn}, the face of $G-v$ containing $v$ is bounded by a cycle $C$.
Let $v_1$, \ldots, $v_7$ be the neighbors of $v$ in $C$ in order.  For $i\in\{1,\ldots,7\}$,
let $P_i$ denote the subpath of $C$ from $v_i$ to $v_{i+1}$ (where $v_8=v_1$).  

By Lemma~\ref{lemma-2conn-rem}, the cycle $C$ is induced, no two neighbors of $v$ in $C$ are adjacent,
and the graph $G-(V(C)\cup \{v\})$ is $2$-edge-connected and has more than one vertex.
Hence, the face of $G-(V(C)\cup \{v\})$ containing $v$ is bounded by a cycle $C'$.
For a subgraph $G'\subseteq G$ containing $C\cup C'$, let $X(G')$ denote the set of faces of $G'$
separated from $v$ by $C'$ and let $Y(G')$ denote the set of faces of $G'$ separated from $v$ by
$C$ but not by $C'$.  
See Figure~\ref{fig-C7}(a) for an example.
For $i\in\{1,\ldots,7\}$, we say that a face $f\in X(G')$ \emph{sees $P_i$}
if there exists a face $f'\in Y(G')$ such that $f'$ is incident with an edge of $P_i$ and
the boundaries of $f$ and $f'$ share at least one edge.

If for some $i\in\{1,\ldots,7\}$, some face of $X(G)$ saw $P_i$, $P_{i+2}$, and $P_{i+4}$ (with indices
taken cyclically) then $\tG=\gamma_4(r_2(\gamma_2(\tG_1,\tG_2)),\tG_3)$ for plane near-cubic graphs
$\tG_1$, $\tG_2$, and $\tG_3$ with $d(\tG_1)=d(\tG_2)=6$ and $d(\tG_3)=7$ (see Figure~\ref{fig-C7}(b)).
Lemma~\ref{lemma-nosg} would imply $n_{\tG_j}\in K_{d(\tG_j)}$ for $j\in \{1,2,3\}$,
and by Lemma~\ref{lemma-sets}(h) and (i), we would have $n_{\tG}\in K_7$, which is a contradiction.
Hence,
\begin{equation}\label{eq-to}
\text{no face of $X(G)$ sees $P_i$, $P_{i+2}$, and $P_{i+4}$.}
\end{equation}

Let $b_1$ be the number of edges of $G$ with one end in $C$ and the other end in $C'$, let $b_2$
be the number of chords of $C'$, let $b_3$ be the number of edges with one end in $C'$ and the other end in $V(G)\setminus V(C\cup C')$,
and let $b_4$ be the number of edges of $G-v-V(C\cup C')$.  Note that $b_1\ge 7$, $b_3$ is at least three times the number of components of $G-v-V(C\cup C')$,
$|E(C)|=7+b_1$, $|E(C')|=b_1+2b_2+b_3$, and
$$|E(G)|=7+(7+b_1)+b_1+(b_1+2b_2+b_3)+b_2+b_3+b_4=14+3b_1+3b_2+2b_3+b_4.$$
A case analysis shows that since (\ref{eq-to}) holds, one of the following conditions holds:
\begin{itemize}
\item $b_1\ge 8$ and $b_2\ge 2$, or
\item $b_1\ge 8$ and $b_3\ge 3$, or
\item $b_3\ge 6$, or
\item $b_3\ge 4$ and $b_4\ge 1$.
\end{itemize}
Hence $3b_1+3b_2+2b_3+b_4\ge 30$, and thus $G$ has at least $44$ edges.  Consequently, $|V(G)|\ge (2|E(G)|-4)/3\ge 28$.
\end{proof}

As a consequence, this verifies Conjecture~\ref{conj-main} for small graphs.
\begin{corollary}\label{cor-cexsize}
Conjecture~\ref{conj-main} holds for all plane near-cubic graphs with less than $30$ vertices.
\end{corollary}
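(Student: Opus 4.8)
The plan is to obtain Corollary~\ref{cor-cexsize} directly from Lemma~\ref{lemma-minc} by a short minor-order argument, since essentially all the real work is already contained in the earlier lemmas. Suppose for contradiction that there is a plane near-cubic graph $\tG=(G,v,\nu)$ with $d(\tG)=5$, with $|V(G)|\le 29$, and with $n_{\tG}\notin B'_5$. Since $K_5=B'_5$ by Lemma~\ref{lemma-sets}(a), this says $n_{\tG}\notin K_5$, so Lemma~\ref{lemma-growincl} applies with $d=5$ and produces a plane near-cubic graph $\tG_0=(G_0,v_0,\nu_0)$ with $d(\tG_0)=7$, $n_{\tG_0}\notin K_7$, and
$$|V(G_0)|\le |V(G)|-2\le 27.$$

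Next I would pass from $\tG_0$ to an \emph{extremal} graph. Among all plane near-cubic graphs $\tG'=(G',v',\nu')$ with $d(\tG')=7$, $n_{\tG'}\notin K_7$, and $G'-v'$ a minor of $G_0-v_0$ — a nonempty collection, as it contains $\tG_0$ — choose one, $\tG^\star=(G^\star,v^\star,\nu^\star)$, for which $|V(G^\star)|+|E(G^\star)|$ is as small as possible (such a choice exists since $|V|+|E|$ is a nonnegative integer). Then $\tG^\star$ is extremal: if some plane near-cubic $\tG''=(G'',v'',\nu'')$ with $d(\tG'')=7$ and $n_{\tG''}\notin K_7$ had $G''-v''$ a proper minor of $G^\star-v^\star$, then by transitivity of the minor relation $G''-v''$ would again be a minor of $G_0-v_0$, so $\tG''$ would lie in the collection above; but a proper-minor operation strictly decreases $|V|+|E|$ of the $v$-deleted graph, hence also of $G''$ relative to $G^\star$, contradicting the minimality of the chosen $\tG^\star$.

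Finally I would count vertices. By Lemma~\ref{lemma-minc}, $|V(G^\star)|\ge 28$, so $G^\star-v^\star$ has at least $27$ vertices; on the other hand, $G^\star-v^\star$ is a minor of $G_0-v_0$, and minors never increase the vertex count, so
$$|V(G^\star-v^\star)|\le |V(G_0-v_0)|=|V(G_0)|-1\le 26,$$
a contradiction. Hence no counterexample to Conjecture~\ref{conj-main} with fewer than $30$ vertices exists. I do not expect any genuine obstacle in this last step: the only points requiring care are that the descent to an extremal graph is legitimate (handled by transitivity of the minor relation, which transfers extremality, together with well-foundedness of the measure $|V|+|E|$) and that the vertex bookkeeping closes exactly — two vertices lost in Lemma~\ref{lemma-growincl}, one more for deleting $v$, against the threshold $28$ of Lemma~\ref{lemma-minc}.
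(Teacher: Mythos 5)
Your proof is correct and follows essentially the same route as the paper: apply Lemma~\ref{lemma-growincl} to drop to a $7$-face graph losing at least two vertices, descend to an extremal graph without increasing the vertex count, and invoke Lemma~\ref{lemma-minc}. The only difference is that you spell out the well-founded descent to an extremal graph (via the measure $|V|+|E|$ and transitivity of the minor relation), which the paper leaves implicit.
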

\begin{proof}
Let $\tG=(G,v,\nu)$ be a counterexample to Conjecture~\ref{conj-main}, and in particular
$n_{\tG}\not\in B'_5=K_5$.  By Lemma~\ref{lemma-growincl}, there exists a plane near-cubic graph $\tG_0=(G_0,v_0,\nu_0)$
such that $d(\tG_0)=7$, $n_{\tG_0}\not\in K_7$, and $|V(G_0)|\le |V(G)|-2$.
Hence, there exists an extremal plane near-cubic graph $\tG_1=(G_1,v_1,\nu_1)$ such that $|V(G_1)|\le |V(G_0)|$.
By Lemma~\ref{lemma-minc}, we have $|V(G_1)|\ge 28$, and thus $|V(G)|\ge 30$.
\end{proof}

\begin{figure}
\begin{center}
\begin{tikzpicture}[scale=1]
\draw[dashed] (0,0) circle(2cm);
\foreach \i in {1,...,8}{\draw (90+51.4*\i:2.1) coordinate (he\i);}
\draw[every node/.style={inner sep=1.8pt,fill,circle}]
\foreach \i in {1,...,8}{
(90+51.4*\i:1.5)  coordinate(x\i) node {} -- (he\i)
(90+25.07+51.4*\i:1.3)  coordinate (y\i) node{} -- (x\i)
(90+25.07+51.4*\i:0.8) coordinate (z\i)  node{} -- (y\i)
}
(y1)--(x2)
(y2)--(x3)
(y3)--(x4)
(y4)--(x5)
(y5)--(x6)
(y6)--(x7)
(y7)--(x1)
(z1)--(z2)--(z3)--(z4)--(z5)--(z6)--(z7)--(z1)
;
\draw(0,0) node{$X(G)$};

\foreach \i in {1,...,7}{
 \pgfmathtruncatemacro{\nexti}{\i+1}
\draw[fill=gray!70!white]
(x\nexti)--(y\nexti)--(z\nexti)--(z\i)--(y\i)--cycle
;
}
\draw[line width=2pt,color=red]
(x1)--(y1)--(x2)--(y2)--(x3)--(y3)--(x4)--(y4)--(x5)--(y5)--(x6)--(y6)--(x7)--(y7)--(x1)
(1.2,-1.2) node{$C$}
;
\draw[line width=2pt,color=blue,dotted]
(z1)--(z2)--(z3)--(z4)--(z5)--(z6)--(z7)--(z1)
(0.4,-0.9) node{$C'$}
;

\draw (0,-2.5) node{$\tilde{G}$};
\draw (0,-3.3) node{(a)};
 \pgfmathsetmacro{\MB}{360/7}
 \foreach \i in {0,...,6}{\draw (90+\i*\MB:2.3) node{\i};} 
\draw[every node/.style={inner sep=1.8pt,fill,circle}]
 \foreach \i in {1,...,7}{
 (x\i) node{}
 (y\i) node{}
 (z\i) node{}
 } 
 ;
\end{tikzpicture}
\hskip 2em
\begin{tikzpicture}[scale=1]
\draw[dashed] (0,0) circle(2cm);
\foreach \i in {1,...,7}{\draw (90+51.4*\i:2.1) coordinate (he\i);}
\draw[every node/.style={inner sep=1.8pt,fill,circle}]
\foreach \i in {1,...,7}{
(90+51.4*\i:1.5)  node (x\i){} -- (he\i)
(90+25.07+51.4*\i:1.3)  node (y\i){} -- (x\i)
(90+25.07+51.4*\i:0.8)  node (z\i){} -- (y\i)
}
(y1)--(x2)
(y2)--(x3)
(y3)--(x4)
(y4)--(x5)
(y5)--(x6)
(y6)--(x7)
(y7)--(x1)
(z1)--(z2)--(z3)--(z4)--(z5)--(z6)--(z7)--(z1)
;
\draw[densely dotted]
(150:2.3) to[bend right=40] (30:2.3)
(270:2.3) to[bend left=25] (90:0.35)
;
\draw (0,-2.5) node{$\tilde{G}$};
\draw (90:1.1) node{$\tilde{G}_{3}$};
\draw (90+2*51.4:1.1) node{$\tilde{G}_{1}$};
\draw (90-2*51.4:1.1) node{$\tilde{G}_{2}$};
 \pgfmathsetmacro{\MB}{360/7}
 \foreach \i in {0,...,6}{\draw (90+\i*\MB:2.3) node{\i};} 
 \foreach \i in {1,...,7}{\draw (90+\i*\MB+12:1.7) node{$v_{\i}$};} 
 \foreach \i in {1,...,7}{\draw (90+\i*\MB+27:1.7) node{$P_{\i}$};} 
 \draw (0,-3.3) node{(b)};
\end{tikzpicture}
\end{center}
\caption{Graph $\tG$ from Lemma~\ref{lemma-minc}. 
Edges incident to $v$ are crossing the dashed circle and $v$ is not depicted.
(a) Cycles $C$ and $C'$ are depicted by thick red and dotted blue, respectively. The gray faces  belong to $Y(G)$. The white face in the center belongs to $X(G)$.
(b) A construction of $\tilde{G}$ from $\tilde{G}_{1},\tilde{G}_{2}$ and $\tilde{G}_{3}$ is indicated by the dotted lines.}\label{fig-C7}
\end{figure}

Note that the analysis at the end of the proof of Lemma~\ref{lemma-minc} can be improved.  By a computer-assisted enumeration,
one can show that to ensure that (\ref{eq-to}) holds, $G-v$ must contain one of 38 specific graphs as a minor; the smallest are depicted in Figure~\ref{fig-28}.
Hence, every counterexample
to Conjecture~\ref{conj-main} must contain one of these 38 as a minor.
The list of these 38 graphs is available at \oururl.

\begin{figure}
\begin{center}
\begin{tikzpicture}
\draw[dashed] (0,0) circle(2cm);
\foreach \i in {1,...,7}{\draw (90+51.4*\i:2.1) coordinate (he\i);}


\draw[every node/.style={inner sep=1.8pt,fill,circle}]
\foreach \i in {1,...,7}{
(90+51.4*\i:1.5)  node (x\i){} -- (he\i)
(90+25.07+51.4*\i:1.3)  node (y\i){} -- (x\i)
(90+25.07+51.4*\i:0.8)  node (z\i){} -- (y\i)
}
(y1)--(x2)
(y2)--(x3)
(y3)--(x4)
(y4)--(x5)
(y5)--(x6)
(y6)--(x7)
(y7)--(x1)
(z1)--(z2)--(z3)--(z4)--(z5)--(z6)--(z7)--(z1)
;

\pgfmathsetmacro{\MB}{360/7}
 
\draw ($(z1)!0.5!(z2)$) node[vtx](a12){};
\draw ($(z2)!0.5!(z3)$) node[vtx](a23){};
\draw ($(z4)!0.5!(z5)$) node[vtx](a45){};
\draw ($(z6)!0.5!(z7)$) node[vtx](a67){};
\draw 
(90+2.5*\MB:0.3) node[vtx](b){}
(90-\MB:0.3) node[vtx](c){}
(a12)--(b)--(a23)
(a45)--(c)--(a67)
(b)--(c)
;
 \foreach \i in {0,...,6}{\draw (90+\i*\MB+2*\MB:2.3) node{\i};} 
\end{tikzpicture}
\hskip 3em
\begin{tikzpicture}
\draw[dashed] (0,0) circle(2cm);
\foreach \i in {1,...,7}{\draw (90+51.4*\i:2.1) coordinate (he\i);}


\draw[every node/.style={inner sep=1.8pt,fill,circle}]
\foreach \i in {1,...,7}{
(90+51.4*\i:1.5)  node (x\i){} -- (he\i)
(90+25.07+51.4*\i:1.3)  node (y\i){} -- (x\i)
(90+25.07+51.4*\i:0.8)  node (z\i){} -- (y\i)
}
(y1)--(x2)
(y2)--(x3)
(y3)--(x4)
(y4)--(x5)
(y5)--(x6)
(y6)--(x7)
(y7)--(x1)
(z1)--(z2)--(z3)--(z4)--(z5)--(z6)--(z7)--(z1)
;

\pgfmathsetmacro{\MB}{360/7}
 
\draw ($(z1)!0.5!(z2)$) node[vtx](a12){};
\draw ($(z2)!0.5!(z3)$) node[vtx](a23){};
\draw ($(z4)!0.5!(z5)$) node[vtx](a45){};
\draw ($(z6)!0.5!(z7)$) node[vtx](a67){};
\draw 
(90+1*\MB:0.3) node[vtx](b){}
(90+3.5*\MB:0.3) node[vtx](c){}
(a12)--(b)--(a67)
(a45)--(c)--(a23)
(b)--(c)
;
 \foreach \i in {0,...,6}{\draw (90+\i*\MB+2*\MB:2.3) node{\i};} 
\end{tikzpicture}

\begin{tikzpicture}
\draw[dashed] (0,0) circle(2cm);
\foreach \i in {1,...,7}{\draw (90+51.4*\i:2.1) coordinate (he\i);}


\draw[every node/.style={inner sep=1.8pt,fill,circle}]
\foreach \i in {2,3,...,7}{
(90+51.4*\i:1.5)  node (x\i){} -- (he\i)
(90+25.07+51.4*\i:1.3)  node (y\i){} -- (x\i)
(90+25.07+51.4*\i:0.8)  node (z\i){} -- (y\i)
}
(90+51.4*1:1.5)  node (x1){} -- (he1)
(90+25.07+51.4*1+12:1.3)  node (y1){}
(90+25.07+51.4*1-12:1.3)  node (y1b){} -- (x1)
(y1)--(y1b)
(90+25.07+51.4*1+18:0.8)  node (z1){} -- (y1)
(90+25.07+51.4*1-18:0.8)  node (z1b){} -- (y1b)

(y1)--(x2)
(y2)--(x3)
(y3)--(x4)
(y4)--(x5)
(y5)--(x6)
(y6)--(x7)
(y7)--(x1)
(z1)--(z2)--(z3)--(z4)--(z5)--(z6)--(z7)--(z1b)--(z1)
;

\pgfmathsetmacro{\MB}{360/7}
 
\draw ($(z3)!0.5!(z4)$) node[vtx](a34){};
\draw ($(z5)!0.5!(z6)$) node[vtx](a56){};
\draw ($(z1)!0.5!(z1b)$) node[vtx](a1){};
\draw 
(0,0) node[vtx](b){}
(a34)--(b)--(a56)
(a1)--(b)
;
 \foreach \i in {0,...,6}{\draw (90+\i*\MB+2*\MB:2.3) node{\i};} 
\end{tikzpicture}

\end{center}
\caption{The smallest minors.}\label{fig-28}
\end{figure}

\bibliographystyle{acm}
\bibliography{../data.bib}

\begin{thebibliography}{1}

\bibitem{AppHak1}
{\sc Appel, K., and Haken, W.}
\newblock Every planar map is four colorable, {P}art {I}: {D}ischarging.
\newblock {\em Illinois J. of Math. 21\/} (1977), 429--490.

\bibitem{AppHakKoc}
{\sc Appel, K., Haken, W., and Koch, J.}
\newblock Every planar map is four colorable, {P}art {II}: {R}educibility.
\newblock {\em Illinois J. of Math. 21\/} (1977), 491--567.

\bibitem{bcr1}
{\sc Cohen, D. I.~A.}
\newblock Block count consistency and the four color problem.
\newblock Manuscript.

\bibitem{bcr2}
{\sc Gismondi, S.~J., and Swart, E.~R.}
\newblock A new type of 4-colour reducibility.
\newblock {\em Congressus Numerantium 82\/} (1991), 33--48.

\bibitem{rsst}
{\sc Robertson, N., Sanders, D.~P., Seymour, P., and Thomas, R.}
\newblock The four colour theorem.
\newblock {\em J. Combin. Theory, Ser.~B 70\/} (1997), 2--44.

\bibitem{tait}
{\sc Tait, P.~G.}
\newblock Note on a theorem in geometry of position.
\newblock {\em Trans. Roy. Soc. Edinburgh 29\/} (1880), 657--660.

\bibitem{tutteflow}
{\sc Tutte, W.}
\newblock A contribution on the theory of chromatic polynomials.
\newblock {\em Canad. J. Math. 6\/} (1954), 80--91.

\end{thebibliography}

\end{document}